\newtheorem{theorem}{Theorem}[section]
\newtheorem{lemma}[theorem]{Lemma}
\newtheorem{corollary}[theorem]{Corollary}
\theoremstyle{remark}
\newtheorem{remark}[theorem]{Remark}
\numberwithin{equation}{section}
\newcommand{\CC}{\mathbb{C}}
\newcommand{\RR}{\mathbb{R}}
\newcommand{\NN}{\mathbb{N}}
\newcommand{\ZZ}{\mathbb{Z}}
\newcommand{\Zpl}{\ZZ_+}
\newcommand{\set}[1]{\underline{#1}}
\newcommand{\setn}[1]{\underline{#1}_0}
\newcommand{\card}[1]{|#1|}
\newcommand{\dd}{{\mathrm{d}}}
\newcommand{\ee}{{\mathrm{e}}}
\newcommand{\ii}{{\mathrm{i}}}
\newcommand{\pii}{{\mathrm{\pi}}}
\newcommand{\ttt}{{\mathrm{t}}}
\newcommand{\dirac}{\delta}                 
\newcommand{\ab}[1]{\vert#1\vert}           
\newcommand{\Ab}[1]{\Big\vert#1\Big\vert}   
\newcommand{\binomial}[2]{\genfrac{(}{)}{0pt}{}{#1}{#2}}
\newcommand{\cala}{\mathcal{A}}
\newcommand{\calf}{\mathcal{F}}
\newcommand{\cals}{\mathcal{S}}
\newcommand{\calt}{\mathcal{T}}
\newcommand{\coeff}[2]{\mathrm{Coeff}(#1;\,#2)}
\newcommand{\Coeff}[2]{\mathrm{Coeff}\Big(#1;\,#2\Big)}
\newcommand{\dtv}[2]{d_{\mathrm{TV}}(#1,#2)}
\newcommand{\dpv}{d_{\mathrm{PV}}}
\newcommand{\Per}[1]{\mathrm{Per}(#1)}
\newcommand{\Repart}{\mathrm{Re}}           
\newcommand{\leer}[1]{}
\begin{document}

\title[Approximate de~Finetti representation and permanents]{On 
Bobkov's approximate de~Finetti representation via 
approximation of permanents of complex rectangular matrices}

\author{Bero Roos}
\address{FB IV -- Department of Mathematics, 
University of Trier, 
54286 Trier, Germany.}
 
\email{bero.roos@uni-trier.de}

\subjclass[2010]{Primary 
60G09, 
62E17, 
15A45. 
}
\keywords{de~Finetti representation, permanent, Hadamard type inequality}

\date{December 14, 2013: Revised version}

\commby{Walter Craig}

\begin{abstract}
Bobkov (J.\ Theoret.\ Probab.\ \textbf{18}(2) (2005) 399--412)
investigated an approximate de~Finetti representation 
for probability measures, on product measurable spaces, which are 
symmetric under permutations of coordinates. One of the main results 
of that paper was an explicit approximation bound for permanents of 
complex rectangular matrices, which was shown by a somewhat complicated 
induction argument. In this paper, we indicate how to avoid the 
induction argument using an (asymptotic) expansion. Our approach 
makes it possible to give new explicit higher order approximation 
bounds for such permanents and in turn for the probability measures 
mentioned above. 
\end{abstract}

\maketitle

\section{Introduction} 
Suppose that $X:=(X_1,X_2,X_3,\dots)$ is an infinite 
exchangeable sequence of random variables on a probability space 
$(\Omega,\cala,P)$ with values in a 
measurable space $(S,\cals)$, that is, the distribution 
$P^{X}$ of 
$X$ on the infinite product measurable space 
$(S^\infty,\cals^{\otimes\infty})$ is invariant under
permutations of a finite number of coordinates. 
The de~Finetti Theorem says that, 
under mild assumptions on the space $(S,\cals)$, there is
a probability space $(T,\calt,\nu)$ and a 
Markov kernel $\mu:\,T\times\cals\longrightarrow[0,1]$, 
$(t,A)\mapsto\mu_t(A)$ such that 
\begin{align*}
P^{X}=\int_T(\mu_t)^{\otimes\infty}\,\dd\nu(t).
\end{align*}
For instance, it suffices to assume that $(S,\cals)$ is a Borel (or 
standard) measurable space, i.e.\ Borel isomorphic to some 
Borel measurable subset of $\RR$ (see Hewitt and Savage \cite{HS55} or 
Diaconis and Freedman \cite{DF80}).  

For a finite exchangeable sequence, an analogous representation does 
not generally hold, but there are approximate de~Finetti results. 
In what follows, let $N\in\NN$, $n\in\set{N}=\{1,\dots,N\}$, and let
$Y_N=(X_1,\dots,X_N)$ be an exchangeable 
family of $S$-valued random variables, that is, the distribution 
$P^{Y_N}$ of $Y_N$, defined on $(S^N,\cals^{\otimes N})$,
is invariant under permutation of coordinates.  
Let $Q_1$ be the probability measure on 
$(S^n,\cals^{\otimes n})$ defined by 
\begin{align*}
Q_1(A)=\int\Big(\frac{1}{N}\sum_{j=1}^N
\dirac_{X_j(\omega)}\Big)^{\otimes n}(A)\,\dd P(\omega)
\end{align*}
for $A\in\cals^{\otimes n}$, where $\dirac_x$ denotes the Dirac 
measure at the point $x\in S$. 
In other words, $Q_1$ is the $P$-expectation of the $n$-th power of an 
empirical measure on $(S^N,\cals^{\otimes N})$. 
The following results can be found in Diaconis and Freedman~\cite{DF80}.
They showed that 
\begin{align}\label{eq719}
\dtv{P^{Y_n}}{Q_1}\leq 1-\frac{N!}{(N-n)!N^n}
\leq \frac{n(n-1)}{2N},
\end{align}
where $\dtv{R}{R'}=\sup_{A\in\cals^{\otimes n}}\ab{R(A)-R'(A)}$ 
denotes the total variation distance between finite signed measures 
$R$ and $R'$ on $(S^n,\cals^{\otimes n})$. Hence, if $\frac{n^2}{N}$ 
is small then $P^{Y_n}$ has an approximate de~Finetti 
representation $Q_1$. It turned out that, in general, the bound (\ref
{eq719}) is sharp. However, if~$S$ is finite and of cardinality 
$\card{S}=d\in\NN$, then the nice inequality
\begin{align}\label{eq1755}
\dtv{P^{Y_n}}{Q_1}\leq \frac{dn}{N}
\end{align}
is available, which, in the case of finite $S$, is 
better than (\ref{eq719}) if $d$ is sufficiently small compared 
to~$n$.

On the other hand, it is possible to obtain similar good 
bounds in the general case if the total variation distance is 
replaced by a weaker metric. Let $\calf_n$ be the set 
of all functions $f:\,S^n\longrightarrow \CC$ such that measurable 
$f_1,\dots,f_n:\,S\longrightarrow \CC$ exist with 
$\ab{f_k(x_k)}\leq1$ for $k\in\set{n}$ and 
$f(x)=\prod_{k=1}^nf_k(x_k)$ for all $x=(x_1,\dots,x_n)\in S^n$. 
We write $f=\bigotimes_{k=1}^n f_k$. Furthermore, let
$\set{N}_{\neq}^n=\{(j_1,\dots,j_n)\in\set{N}^n\,|\,
j_k\neq j_\ell \mbox{ for all }k,\ell\in\set{n}\mbox{ with }
k\neq \ell\}$.

Bobkov \cite{bob05} showed in his Theorem 1.1 (see also p.~405 there) 
the inequality 
\begin{align}\label{eq41887}
\sup_{f\in\calf_n}\Ab{\int f\,\dd (P^{Y_n}-Q_1)} \leq C\frac{n}{N}
\quad \mbox{ with } C=16.
\end{align}
For the proof, he used the representation
\begin{align}
\lefteqn{
\int f\,\dd (P^{Y_n}-Q_1)
=\int\Big(\prod_{k=1}^nf_k(X_k(\omega)) - 
\int f\,\dd\Big(\frac{1}{N}\sum_{j=1}^N
\dirac_{X_j(\omega)}\Big)^{\otimes n}\Big)\,\dd P(\omega)}
\qquad\qquad\nonumber\\
&=\int\Big(\frac{(N-n)!}{N!}\sum_{j\in\set{N}_{\neq}^n}
\prod_{k=1}^nf_k(X_{j_k}(\omega)) 
-\prod_{k=1}^n\Big(\frac{1}{N}\sum_{j=1}^N
f_k(X_j(\omega))\Big)\Big)\,\dd P(\omega)\label{eq892762}
\end{align}
for $f=\bigotimes_{k=1}^n f_k\in\calf_n$ 
and a remarkable approximation result for 
permanents of complex rectangular matrices
(see Theorem A below), which he proved by using 
a somewhat complicated induction argument. The permanent of a complex 
rectangular matrix $Z=(z_{j,k})\in\CC^{N\times n}$ with $N\in\NN$ 
and $n\in\set{N}$ is defined by
\begin{align*}
\Per{Z}:=\sum_{j\in\set{N}_{\neq}^n}\prod_{k=1}^nz_{j_k,k}.
\end{align*}
For general properties of permanents, we refer the reader to 
Minc \cite{min78} and Cheon and Wanless \cite{CW05}.\medskip 

\noindent
\textbf{Theorem A.} (Bobkov \cite[Theorem 2.1]{bob05})
\textit{
Let $N\in\NN$, $n\in\set{N}$ and $Z=(z_{j,k})\in\CC^{N\times n}$. 
For $j\in\set{N}$ and $k\in\set{n}$, we assume that
$\ab{z_{j,k}}\leq1$  and set
$\widetilde{z}_k=\frac{1}{N}\sum_{j=1}^Nz_{j,k}$. Then
\begin{align}\label{eq2645}
\Ab{\frac{(N-n)!}{N!}\Per{Z}-\prod_{k=1}^n\widetilde{z}_k}\leq 
C\frac{n}{N} \quad \mbox{ with }C=16.
\end{align}
}  

From Proposition 4.1 in Bobkov \cite{bob04} it 
follows that (\ref{eq41887}) and (\ref{eq2645}) hold with the better 
constant $C=6$ if $z_{j,k}=z_{j,1}$ for all $j\in\set{N}$ and 
$k\in\set{n}$. However, Theorem~\ref{th12} below shows that $C$ can 
always be taken smaller than 3.57. 

For two finite signed measures $R$ and $R'$ on 
$(S^n,\cals^{\otimes n})$, let 
\begin{align*}
\dpv(R,R')
=\sup_{A_1,\dots,A_n\in\cals}\ab{R(A_1\times\ldots\times A_n)
-R'(A_1\times\ldots\times A_n)}
\end{align*}
denote the so-called product variation between $R$ and $R'$. 
Obviously $\dpv$ is a metric on the set of all finite signed
measures on $(S^n,\cals^{\otimes n})$. Furthermore,  
\begin{align*}
\dpv(R,R')\leq\sup_{f\in\calf_n}\Ab{\int f\,\dd (R-R')}. 
\end{align*}
Therefore (\ref{eq41887}) and the inequalities of Theorem \ref{th363}
below imply bounds for $\dpv$.

In the next section, we present refinements of (\ref{eq2645}), see 
Theorems \ref{th2994}, \ref{th12} and Corollary \ref{cor387}. The 
latter together with (\ref{eq892762}) and a similar representation
implies Theorem \ref{th363} below, the first part of which is better 
than (\ref{eq41887}) with $C=3.57$ if $\frac{n}{N}\leq\frac{1}{2}$. 
The second part shows that, if $n\geq 2$ and in turn $N\geq 2$,
a more accurate approximation of $P^{Y_n}$ by a finite signed measure $Q_2$ on 
$(S^n,\cals^{\otimes n})$ is possible, where
\begin{align*}
Q_2(A) 
&=Q_1(A)-\frac{1}{N(N-1)}\sum_{K\subseteq\set{n}:\,\card{K}=2}
\sum_{j=1}^N\int\Big(\bigotimes_{k\in \set{n}}
R_{j,k,K}(\omega)\Big)(A)\,\dd P(\omega) 
\end{align*} 
for $A\in\cals^{\otimes n}$ and
\begin{align*}
R_{j,k,K}(\omega)=\left\{
\begin{array}{ll}
\dirac_{X_j(\omega)}-\frac{1}{N}
\sum_{\ell=1}^N\dirac_{X_\ell(\omega)},& 
\mbox{ if } k\in K,\\
\frac{1}{N}\sum_{\ell=1}^N\dirac_{X_\ell(\omega)},&
\mbox{ if } k\in\set{n}\setminus K.
\end{array}
\right.
\end{align*} 
\begin{theorem}\label{th363}
Under the assumptions above and if $\frac{n}{N}<1$, we have 
\begin{align} 
\sup_{f\in\calf_n}
\Ab{\int f\,\dd (P^{Y_n}-Q_1)}
&\leq  \frac{n}{N}
+2.12\,\frac{(\frac{n}{N})^{3/2}}{(1-\frac{n}{N})^{3/4}},
\label{eq71665}\\
\sup_{f\in\calf_n}
\Ab{\int f\,\dd (P^{Y_n}-Q_2)}
&\leq\sqrt{3}\Big(\frac{n}{N}\Big)^{3/2}
+2.27\,\frac{(\frac{n}{N})^2}{(1-\frac{n}{N})^{3/4}}, \quad
\mbox{ if }n\geq 2.\label{eq71666}
\end{align}
\end{theorem}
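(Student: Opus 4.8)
The plan is to reduce both bounds to pointwise estimates for the normalized permanent, applying Corollary~\ref{cor387} under the integral sign in (\ref{eq892762}) and in its $Q_2$-analogue. Fix $f=\bigotimes_{k=1}^n f_k\in\calf_n$ and, for each $\omega\in\Omega$, form the matrix $Z(\omega)=(z_{j,k}(\omega))\in\CC^{N\times n}$ with $z_{j,k}(\omega)=f_k(X_j(\omega))$. Because $\ab{f_k(x_k)}\le1$, each $Z(\omega)$ satisfies the hypotheses of Theorem~A and Corollary~\ref{cor387}, and $\widetilde z_k(\omega)=\frac1N\sum_{j=1}^N f_k(X_j(\omega))$. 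With this notation, the bracketed integrand in (\ref{eq892762}) is exactly $\frac{(N-n)!}{N!}\Per{Z(\omega)}-\prod_{k=1}^n\widetilde z_k(\omega)$.

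For (\ref{eq71665}) I would invoke the first-order bound of Corollary~\ref{cor387}, which estimates $\Ab{\frac{(N-n)!}{N!}\Per{Z}-\prod_{k=1}^n\widetilde z_k}$ by $\frac nN+2.12\,(n/N)^{3/2}(1-n/N)^{-3/4}$ uniformly over matrices with entries bounded by $1$. Applying this pointwise in $\omega$, and using that $P$ is a probability measure, gives $\Ab{\int f\,\dd(P^{Y_n}-Q_1)}\le\int\ab{\,\cdot\,}\,\dd P$ bounded by the same quantity; taking the supremum over $f\in\calf_n$ yields (\ref{eq71665}).

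For (\ref{eq71666}) I first need the analogue of (\ref{eq892762}) for $Q_2$. Since $f$ is a product, $\int f\,\dd\big(\bigotimes_{k\in\set{n}}R_{j,k,K}(\omega)\big)=\prod_{k=1}^n\int f_k\,\dd R_{j,k,K}(\omega)$, and by definition $\int f_k\,\dd R_{j,k,K}(\omega)=f_k(X_j(\omega))-\widetilde z_k(\omega)$ for $k\in K$ and $=\widetilde z_k(\omega)$ for $k\in\set{n}\setminus K$. Hence the correction $Q_1-Q_2$ contributes to $\int f\,\dd(P^{Y_n}-Q_2)$ the integral over $\omega$ of
\[
\frac{1}{N(N-1)}\sum_{K\subseteq\set{n}:\,\card{K}=2}\sum_{j=1}^N\;\prod_{k\in K}\big(z_{j,k}(\omega)-\widetilde z_k(\omega)\big)\prod_{k\in\set{n}\setminus K}\widetilde z_k(\omega),
\]
so that the integrand of $\int f\,\dd(P^{Y_n}-Q_2)$ becomes $\frac{(N-n)!}{N!}\Per{Z(\omega)}-\prod_{k=1}^n\widetilde z_k(\omega)$ minus this expression. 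The point is that this is precisely the remainder controlled by the second-order bound of Corollary~\ref{cor387}, namely $\sqrt3\,(n/N)^{3/2}+2.27\,(n/N)^2(1-n/N)^{-3/4}$ for $n\ge2$; integrating and taking the supremum gives (\ref{eq71666}).

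The main obstacle is the last identification: one must verify that the displayed double sum is exactly the second-order term produced by the (asymptotic) expansion of the normalized permanent about $\prod_{k=1}^n\widetilde z_k$ that underlies Corollary~\ref{cor387} --- equivalently, the leading correction in passing from sampling rows with replacement (which yields $\prod_k\widetilde z_k$) to sampling without replacement (which yields $\frac{(N-n)!}{N!}\Per{Z}$). Matching the combinatorial weight $\frac{1}{N(N-1)}$ and the restriction to two-element sets $K$ term by term against the partition structure of the index maps $\set{n}\to\set{N}$ is where the precise form of Corollary~\ref{cor387} must be used; once this representation is in place, the remaining steps are the routine pointwise application of the permanent bounds and integration against the probability measure $P$.
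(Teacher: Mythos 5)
Your proposal is correct and is essentially the paper's own proof: apply Corollary~\ref{cor387} pointwise to the matrix $Z(\omega)=(f_k(X_j(\omega)))_{j,k}$ via (\ref{eq892762}) and the analogous representation for $Q_2$, then integrate with respect to the probability measure $P$ and take the supremum over $f\in\calf_n$. The two points you left open are already settled in the paper: the ``main obstacle'' is resolved by (\ref{eq71977}) in Remark~\ref{rem76345} --- note that the integrand is the permanent difference \emph{plus} your displayed double sum (your ``minus'' is a sign slip contradicting your preceding sentence), which by (\ref{eq71977}) is exactly $\frac{(N-n)!}{N!}\Per{Z(\omega)}-H_2(Z(\omega))$ --- and the passage from the $\gamma$-form of Corollary~\ref{cor387} to the stated $\frac{n}{N}$-form follows from $\gamma(1/2)\leq\frac{n}{N}$ and $\gamma\leq\frac{n}{N}$ (Remark~\ref{rem474}), from bounding the first term of (\ref{eq81777}) via $\sum_j x_j^{3/2}\leq\big(\sum_j x_j\big)^{3/2}$ with $\sum_j x_j=\gamma(1/3)\leq\frac{n}{N}$, and from the numerical evaluations $3^{1/4}C_3\leq 2.12$ and $2^{1/2}C_4\leq 2.27$.
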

Higher order results are also possible using Theorem~\ref{th2994}  
or Theorem~\ref{th12} below. We omit the details. 
\section{Approximation of permanents}
For $n\in\NN$, the  indeterminate $x=(x_1,\dots,x_n)$ and 
$r\in\Zpl^n=\{0,1,2,\dots\}^n$, we set 
$x^r=\prod_{k\in\set{n}}x_k^{r_k}$ and 
write $a_r=\coeff{x^r}{\sum_{s\in\Zpl^n}a_sx^s}$ for the 
coefficient of $x^r$ in the formal power series 
$\sum_{s\in\Zpl^n}a_sx^s$, 
$(a_s\in\CC)$. Sometimes $y$ will be our indeterminate. 
However, the symbols $x$ and $y$ may have other meanings as 
indicated below. In what follows, we use the simple fact that, for 
$N\in\NN$, $n\in\set{N}$ and $Z=(z_{j,k})\in\CC^{N\times n}$, 
\begin{align}\label{eq1326}
\Per{Z}= 
\Coeff{x_1\cdots x_n}{\prod_{j=1}^N
\Big(1+\sum_{k=1}^nz_{j,k}x_k\Big)}.
\end{align}
Furthermore, if additionally $Z$ has identical columns,
i.e.\ $z_{j,k}=z_{j,1}$ for all $j\in\set{N}$ and $k\in\set{n}$,
then 
\begin{align}\label{eq1326b}
\Per{Z}=n!\,\Coeff{y^n}{\prod_{j=1}^N(1+z_{j,1}y)}.
\end{align}

The main result of this section is Theorem~\ref{th2994} below and 
requires the following lemmas, the first of which plays a prominent 
role in the theory of polynomials over infinite dimensional spaces. 
Its proof is due to H\"ormander \cite[Theorem 4]{hoe54};
see also Harris \cite{har96} and
Dineen \cite[Proposition 1.44 and the notes on page 79]{din99}. 
However, first versions for real spaces were already shown in 
Kellogg \cite{kel28} and Banach \cite{ban38}.
\begin{lemma}\label{la61966}
Let $n\in\NN$, 
$E$ be a complex Hilbert space, $F$ be a complex Banach space,
$g:\,E^n\longrightarrow F$ be $n$-linear (i.e.\ linear in each 
component), continuous and symmetric in its arguments.
Let $\widehat{g}(x)=g(x,\dots,x)$ for $x\in E$ and
\begin{align*}
\|g\|&=\sup\{\|g(x_1,\dots,x_n)\|\,|\,x_i\in E,\|x_i\|\leq1
\mbox{ for each }i\in\set{n}\}\\
\|\widehat{g}\|&=\sup\{\|\widehat{g}(x)\|\,|\,x\in E,\,\|x\|\leq 1\}.
\end{align*}
Then 
$\|g\|=\|\widehat{g}\|$.
\end{lemma}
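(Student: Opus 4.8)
The inequality $\|\widehat g\|\le\|g\|$ is immediate: for $\|x\|\le1$ the tuple $(x,\dots,x)$ is admissible for $g$, so $\|\widehat g(x)\|=\|g(x,\dots,x)\|\le\|g\|$. The whole content is the reverse bound $\|g\|\le\|\widehat g\|$, i.e.\ that the fully mixed value $g(x_1,\dots,x_n)$ at unit vectors is controlled by the diagonal values with the \emph{sharp} constant $1$. Since everything happens in $\mathrm{span}(x_1,\dots,x_n)$, I may assume $E$ finite-dimensional. My plan is to recover $g$ from $\widehat g$ by polarization: expanding $\widehat g\big(\sum_{k=1}^n\ee^{\ii\theta_k}x_k\big)$ by multilinearity and symmetry, the coefficient of $\ee^{\ii(\theta_1+\dots+\theta_n)}$ is $n!\,g(x_1,\dots,x_n)$, whence
\begin{align*}
n!\,g(x_1,\dots,x_n)=\frac{1}{(2\pii)^n}\int_{[0,2\pii]^n}
\ee^{-\ii(\theta_1+\dots+\theta_n)}\,\widehat g\Big(\sum_{k=1}^n\ee^{\ii\theta_k}x_k\Big)\,\dd\theta.
\end{align*}

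Bounding the integrand by $\|\widehat g\|\,\big\|\sum_k\ee^{\ii\theta_k}x_k\big\|^n$ and integrating is, however, too lossy to reach constant $1$: already for $n=4$ with $x_1=\dots=x_4$ it overestimates by the factor $28/24=\tfrac76$. The sharp constant forces one to use the inner product, not merely the norm. The mechanism is transparent for $n=2$, where the complex polarization identity $4\,g(x_1,x_2)=\widehat g(x_1+x_2)-\widehat g(x_1-x_2)$ together with the parallelogram law gives
\begin{align*}
4\,\|g(x_1,x_2)\|\le\|x_1+x_2\|^2+\|x_1-x_2\|^2=2(\|x_1\|^2+\|x_2\|^2)\le4,
\end{align*}
so $\|g(x_1,x_2)\|\le\|\widehat g\|$. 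It is exactly the identity $\frac{1}{(2\pii)^2}\int\|\ee^{\ii\theta_1}x_1+\ee^{\ii\theta_2}x_2\|^2\,\dd\theta=\|x_1\|^2+\|x_2\|^2$, valid only because the norm comes from an inner product, that the triangle-inequality estimate discards.

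For general $n$ I would argue by induction on $n$. Fixing the last argument $a$ (unit) and applying the inductive hypothesis to the symmetric $(n-1)$-linear form $(y_1,\dots,y_{n-1})\mapsto g(y_1,\dots,y_{n-1},a)$ reduces the claim to the single estimate $\|g(x,\dots,x,a)\|\le\|\widehat g\|$ for unit $x,a$. Writing $q(\zeta)=\widehat g(x+\zeta a)$, one has $q'(0)=n\,g(x,\dots,x,a)$, so the task becomes the Bernstein/Markov-type bound $\|q'(0)\|\le n\,\|\widehat g\|$ for the $F$-valued polynomial $q$ of degree $n$, whose growth is governed by $\|q(\zeta)\|\le\|\widehat g\|\,\|x+\zeta a\|^n$ with $\|x+\zeta a\|^2=1+2\,\Repart(\bar\zeta\langle x,a\rangle)+|\zeta|^2$.

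The main obstacle is precisely this derivative estimate with the sharp constant $n$: a crude Cauchy estimate on one circle, or Cauchy--Schwarz in $L^2$ of the torus, again loses a constant factor for the same reason as above. Extracting $q'(0)$ sharply requires the full Hilbert-space growth profile of $\|x+\zeta a\|$. The extreme case $|\langle x,a\rangle|=1$ is revealing and clean: then $a=\omega x$ for a unit scalar $\omega$, homogeneity gives $q(\zeta)=(1+\zeta\omega)^n\,\widehat g(x)$, and hence $\|q'(0)\|=n\,\|\widehat g(x)\|\le n$ exactly; the intermediate regime $|\langle x,a\rangle|<1$ is governed by the same inner-product identities that make the $n=2$ step sharp. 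I expect this Bernstein-type step, with the bookkeeping of the induction, to be the only genuine difficulty, the finite-dimensional reduction and the polarization formula being routine. It must be stressed that the inner-product structure is used essentially: in a general complex Banach space only $\|g\|\le\frac{n^n}{n!}\|\widehat g\|$ holds, and this is attained, so no purely metric argument can yield the constant $1$.
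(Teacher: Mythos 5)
Your trivial direction, the finite-dimensional reduction, the diagnosis that torus-averaged polarization cannot yield the constant $1$ (the $28/24$ computation for $n=4$ is correct), the complete $n=2$ argument via polarization plus the parallelogram law, and the inductive reduction of $\|g\|\leq\|\widehat{g}\|$ to the single estimate $\|g(x,\dots,x,a)\|\leq\|\widehat{g}\|$ for unit $x,a$ are all sound. But the proof is not complete: everything hinges on the sharp derivative bound $\|q'(0)\|\leq n\,\|\widehat{g}\|$ for $q(\zeta)=\widehat{g}(x+\zeta a)$ under the growth condition $\|q(\zeta)\|\leq\|\widehat{g}\|\,\|x+\zeta a\|^{n}$, and you establish this only in the degenerate case $\ab{\langle x,a\rangle}=1$, explicitly deferring the generic regime (``I expect this Bernstein-type step \dots to be the only genuine difficulty''). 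That step is not bookkeeping; it \emph{is} the theorem. Already for $\langle x,a\rangle=0$ it asserts that a polynomial of degree at most $n$ with $\ab{q(\zeta)}\leq(1+\ab{\zeta}^{2})^{n/2}$ on all of $\CC$ satisfies $\ab{q'(0)}\leq n$; as you note, a Cauchy estimate on the optimal circle $r=(n-1)^{-1/2}$ gives only $n^{n/2}/(n-1)^{(n-1)/2}>n$, and the known proofs pass through the classical Bernstein inequality for trigonometric polynomials (reduce to scalar values by Hahn--Banach, rotate $\zeta$ to a real direction, and consider $P(\theta)=\cos^{n}\theta\;q(\tan\theta)$, a trigonometric polynomial of degree at most $n$ with $\sup\ab{P}\leq1$ and $P'(0)=q'(0)$), while the regime $0<\ab{\langle x,a\rangle}<1$ requires a genuinely harder weighted variant rather than ``the same inner-product identities'' as $n=2$. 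So what you have is a correct reduction of the lemma to a nontrivial Bernstein-type inequality, not a proof of the lemma.

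For the comparison you were asked about: the paper offers no proof of Lemma \ref{la61966} at all; it quotes the result from H\"ormander \cite{hoe54}, whose original argument goes through Grace's apolarity theorem, and points to Harris \cite{har96} and Dineen \cite{din99} for alternative treatments. The route you sketch --- induction plus a sharp Bernstein inequality for polynomials with Hilbert-norm growth --- is precisely Harris's, so your plan is viable and historically apt; but to turn it into a proof you must actually prove the weighted Bernstein estimate (or switch to the apolarity argument), since that is where the inner-product structure enters with the sharp constant, exactly as your closing remark about the Banach-space constant $n^{n}/n!$ correctly predicts it must.
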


The proof of the next lemma uses Lemma \ref{la61966} and
the Cauchy integral formula. We note that
the more complicated Lemma 3 in \cite{roo01} only yields 
a weaker result under the assumptions used here. We always set $0^0=1$.
\begin{lemma}\label{la2655}
Let $N\in\NN$, $n\in\set{N}$ and $A=(a_{j,k})\in\CC^{N\times n}$. For 
each $k\in\set{n}$, we assume that $\sum_{j=1}^Na_{j,k}=0$ and
set $\alpha_k=\frac{1}{N}\sum_{j=1}^N\ab{a_{j,k}}^2$. Then
we have 
\begin{align}\label{eq45}
\ab{\Per{A}}
&\leq\frac{n!\,N^{N/2}}{(N-n)^{(N-n)/2}n^{n/2}}
\prod_{k=1}^n\sqrt{\alpha_k}.
\end{align}
\end{lemma}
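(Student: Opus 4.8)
The plan is to combine the coefficient representation of the permanent with Lemma~\ref{la61966}, using the hypothesis $\sum_{j=1}^N a_{j,k}=0$ to pass from a sum over $\set{N}_{\neq}^n$ to a full analytic object on which the polarization identity applies. First I would invoke \eqref{eq1326} to write
\begin{align*}
\Per{A}=\Coeff{x_1\cdots x_n}{\prod_{j=1}^N\Big(1+\sum_{k=1}^n a_{j,k}x_k\Big)}.
\end{align*}
The idea is to view $P(x):=\prod_{j=1}^N\bigl(1+\sum_{k=1}^n a_{j,k}x_k\bigr)$ as a polynomial on $\CC^n$, and to recognize $\Per{A}$ as (up to the factor $1$) the coefficient of the fully mixed monomial $x_1\cdots x_n$. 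To extract that coefficient by complex analysis, I would use the iterated Cauchy integral formula: for a polynomial, $\coeff{x_1\cdots x_n}{P}=(2\pi)^{-n}\int_{[0,2\pi]^n}P(\ee^{\ii\theta_1},\dots,\ee^{\ii\theta_n})\ee^{-\ii(\theta_1+\cdots+\theta_n)}\,\dd\theta$, so that $\ab{\Per{A}}\leq\sup_{\ab{x_k}=1}\ab{P(x)}$. However, this crude bound is too weak; the point of the hypothesis $\sum_j a_{j,k}=0$ is that the $x_k$-linear terms of $P$ integrate away, so only higher-degree symmetric structure survives, and I would instead extract the coefficient after rescaling the radii.

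The key device is to pick a clever polyradius. Writing the Cauchy formula on a polytorus of radius $\rho_k$ in the $x_k$-variable gives $\ab{\Per{A}}\leq\bigl(\prod_k\rho_k^{-1}\bigr)\sup_{\ab{x_k}=\rho_k}\ab{P(x)}$. Because each factor of $P$ has the form $1+\sum_k a_{j,k}x_k$ and $\sum_j a_{j,k}=0$, I expect the relevant estimate to come from bounding $\prod_{j=1}^N\bigl\vert 1+\sum_k a_{j,k}x_k\bigr\vert\leq\exp\bigl(\tfrac12\sum_j\ab{\sum_k a_{j,k}x_k}^2\bigr)$ after the linear terms cancel in an appropriate averaged sense; on the torus of radius $\rho_k$ this produces something controlled by $\sum_k\rho_k^2 N\alpha_k/2$ via Cauchy--Schwarz across $j$. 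The role of Lemma~\ref{la61966} is to let me replace the genuinely multilinear quantity $\Per{A}$ — which is the diagonal of a symmetric $n$-linear form built from the columns — by its associated homogeneous polynomial $\widehat{g}$ without loss in norm, thereby reducing an $n$-variable extremal problem to a one-variable one where the Hilbert-space structure (encoded in $\alpha_k=\frac1N\sum_j\ab{a_{j,k}}^2$) can be used directly. Concretely, I would set up $g$ as the symmetric $n$-linear form on $\CC^N$ (with its standard inner product) whose diagonal restriction recovers $\Per{A}$ after normalization, bound $\|\widehat g\|$, and transfer it back via $\|g\|=\|\widehat g\|$.

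Having reduced to optimizing $\bigl(\prod_k\rho_k^{-1}\bigr)\exp(\text{quadratic in }\rho)$, I would choose all radii equal, $\rho_k=\rho$, and minimize $\rho^{-n}\exp(c N\rho^2)$-type expressions over $\rho>0$; the stationary point yields $\rho^2\sim \frac{n}{N}\cdot(\text{const})$ and, after substituting back, the characteristic factor $N^{N/2}(N-n)^{-(N-n)/2}n^{-n/2}$ that appears in \eqref{eq45}. I would track the $\prod_k\sqrt{\alpha_k}$ dependence through Cauchy--Schwarz so that each column contributes exactly $\sqrt{\alpha_k}$, and the combinatorial prefactor $n!$ should emerge from the $n$-fold coefficient extraction matching the diagonalization in Lemma~\ref{la61966}. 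The main obstacle I anticipate is getting the constant sharp: a naive $\exp(\tfrac12\ab{\cdot}^2)$ bound on each factor would overshoot, and the exact extremal value $N^{N/2}/(N-n)^{(N-n)/2}n^{n/2}$ strongly suggests that the optimization must be carried out with the precise polynomial rather than an exponential surrogate — most likely by bounding $\sup_{\ab{x_k}=\rho}\ab{P(x)}$ through the inequality of arithmetic and geometric means applied to $\prod_j\bigl(1+\ab{\sum_k a_{j,k}x_k}^2\bigr)$ or a comparable product, and then choosing $\rho$ so that the resulting bound factors cleanly. Reconciling the Hörmander polarization step with this radius optimization, so that no constant is lost in either direction, is where I expect the real work to lie.
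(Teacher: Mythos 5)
Your plan assembles the right ingredients---Lemma~\ref{la61966}, a Cauchy-integral coefficient extraction, the arithmetic--geometric mean bound applied to the exact product rather than an exponential surrogate, and a final radius optimization---and this is indeed the skeleton of the paper's proof. But two gaps in how you combine them would prevent you from reaching \eqref{eq45}. First, you set up the symmetric $n$-linear form $g$ on $\CC^N$ \emph{with its standard inner product on the whole space}. That is fatal to the constant: on all of $\CC^N$ the diagonal is $\widehat g(x)=\Per{x,\dots,x}=n!\,e_n(x)$ (with $e_n$ the elementary symmetric polynomial), whose supremum over the unit ball is attained at the constant vector $x_j=1/\sqrt N$, giving $\|\widehat g\|= n!\binomial{N}{n}N^{-n/2}$ and hence, after transferring back via $\|g\|=\|\widehat g\|$, only the weaker Hadamard-type bound \eqref{eq618}, $\ab{\Per{A}}\le \frac{N!}{(N-n)!}\prod_k\sqrt{\alpha_k}$, which the paper's remark shows is strictly worse than \eqref{eq45}. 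The paper instead takes the Hilbert space to be the zero-sum hyperplane $E=\{x\in\CC^{N\times 1}:\sum_{j=1}^N x_j=0\}$, so the supremum defining $\|\widehat g\|$ runs only over zero-sum vectors. This is also where the hypothesis $\sum_j a_{j,k}=0$ actually enters: not by making ``linear terms integrate away'' in the Cauchy formula (extracting the coefficient of $x_1\cdots x_n$ discards all other monomials regardless), but through the pointwise bound, valid for $w\in E$, $\prod_j\ab{1+w_j}=\big(\prod_j(1+2\Repart(w_j)+\ab{w_j}^2)\big)^{1/2}\le\big(1+\frac1N\sum_j\ab{w_j}^2\big)^{N/2}$, where the vanishing of $\sum_j\Repart(w_j)$ is exactly what lets the AM--GM step close.

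Second, the order of operations matters: the Cauchy extraction has to come \emph{after} diagonalization, in one variable. On the diagonal the matrix has identical columns, so \eqref{eq1326b} applies, and a single contour integral of radius $r$ yields $\ab{\widehat g(x)}\le \frac{n!}{r^n}\sup_t\prod_j\ab{1+x_jr\ee^{\ii t}}\le\frac{n!}{r^n}\big(1+\frac{r^2}{N}\big)^{N/2}$, with the prefactor $n!$ built in; taking $r^2=nN/(N-n)$ as a limit gives $\|\widehat g\|\le \frac{n!\,N^{(N-n)/2}}{(N-n)^{(N-n)/2}n^{n/2}}$, and then $\ab{\Per{A}}\le\|\widehat g\|\,\prod_k\|a_{\cdot,k}\|=\|\widehat g\|\,N^{n/2}\prod_k\sqrt{\alpha_k}$ finishes. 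Your alternative---extracting $\coeff{x_1\cdots x_n}{\cdot}$ over an $n$-dimensional polytorus of radii $\rho_k$ before any polarization---cannot reach the stated constant: the factor $\prod_k\rho_k^{-1}$ combined with the AM--GM and Cauchy--Schwarz bounds yields, after optimizing the radii, a prefactor $n^n$ where the lemma has $n!$, an exponential loss; the $n!$ is recovered precisely by H\"ormander's lossless polarization plus the one-variable representation, not by ``matching'' the polytorus extraction. You anticipated that reconciling the polarization step with the radius optimization is ``where the real work lies''---that reconciliation \emph{is} the proof, and as written your two devices (multivariate extraction; full-space polarization) each give away exactly the sharpness the lemma asserts.
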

\begin{proof}
We may assume that $\alpha_k\neq0$ for each $k\in\set{n}$. 
Let $E=\{x={^{\ttt}}(x_1,\dots,x_N)\in\CC^{N\times 1}\,|\,
\sum_{j=1}^Nx_j=0\}$ be equipped with the standard inner product 
and consider $F=\CC$, 
$g:\,E^n\longrightarrow F$, $g(x^{(1)},\dots,x^{(n)})
=\Per{x^{(1)},\dots,x^{(n)}}$
for $x^{(1)},\dots,x^{(n)}\in E$, where ``$\ttt$'' denotes 
transposition. 
It is easily seen that Lemma \ref{la61966} can be applied, which gives
$\ab{\Per{A}}\leq \|\widehat{g}\|N^{n/2}\prod_{k=1}^n\sqrt{\alpha_k}$. 
Using (\ref{eq1326b}), we obtain for $x\in E$ with $\|x\|\leq 1$ 
and arbitrary $r\in(0,\infty)$ that
\begin{align*} 
\ab{\widehat{g}(x)}
&=\frac{n!}{2\pii r^n}\Ab{\int_{-\pii}^\pii\ee^{-\ii nt}
\prod_{j=1}^N\Big(1+x_jr\ee^{\ii t}\Big)\dd t}\\
&\leq\frac{n!}{r^n}\sup_{t\in[-\pii,\pii]}
\prod_{j=1}^N\ab{1+x_{j}r\ee^{\ii t}} 
\leq\frac{n!}{r^n} \Big(1+\frac{r^2}{N}\Big)^{N/2};
\end{align*}
the last inequality follows from the inequality between arithmetic 
and geometric means. Indeed, for $w\in E$, we have 
\begin{align*}
\prod_{j=1}^N\ab{1+w_j}
&=\Big(\prod_{j=1}^N(1+2\Repart(w_j)+\ab{w_j}^2)\Big)^{1/2}
\leq\Big(1+\frac{1}{N}\sum_{j=1}^N\ab{w_j}^2\Big)^{N/2},
\end{align*}
where $\Repart(w_j)$ denotes the real part of $w_j$. 
Let $\varepsilon\in(0,\infty)$ and
\begin{equation*}
r=\Big(\frac{nN}{N-n+\varepsilon}\Big)^{1/2}.
\end{equation*}
Letting $\varepsilon\to0$ yields
$\|\widehat{g}\|\leq\frac{n!N^{(N-n)/2}}{(N-n)^{(N-n)/2}n^{n/2}}$
and the result is shown. 
\end{proof}

\begin{remark} 
Inequality (\ref{eq45}) can be viewed as a Hadamard type inequality 
for permanents of matrices with zero column sums. 
Another inequality of this type is 
\begin{equation}\label{eq617}
\ab{\Per{Z}}
\leq N!\prod_{k=1}^N\Big(\frac{1}{N}
\sum_{j=1}^N\ab{z_{j,k}}^2\Big)^{1/2}, 
\end{equation}
which is valid for general quadratic matrices 
$Z=(z_{j,k})\in\CC^{N\times N}$ with $N\in\NN$.
Carlen \emph{et al.} \cite{CLL06} gave two proofs of (\ref{eq617}), which, however, also follows directly from Lemma \ref{la61966} 
together with the inequality between arithmetic and geometric means.

Inequality (\ref{eq617}) can be used to derive an alternative bound for 
the left-hand side of (\ref{eq45}) as follows. Consider the 
assumptions of Lemma \ref{la2655} and define 
$Z=(z_{j,k})\in\CC^{N\times N}$ with
$z_{j,k}=a_{j,k}$ for $j\in\set{N}$, $k\in\set{n}$ and  
$z_{j,k}=1$ for $j\in\set{N}$, $k\in\set{N}\setminus\set{n}$. Then
\begin{equation}\label{eq618}
\ab{\Per{A}}=\frac{\ab{\Per{Z}}}{(N-n)!}
\leq\frac{N!}{(N-n)!}\prod_{k=1}^n\sqrt{\alpha_k}.
\end{equation}
However, it turns out that 
(\ref{eq45}) is always better than the inequality in (\ref{eq618}),
since
\begin{equation*}
\frac{N^{N}}{(N-n)^{N-n}n^{n}}
\leq \Big(\prod_{m=1}^{N-n}\frac{N-m+1}{N-n-m+1}\Big) 
\Big(\prod_{m=1}^n\frac{N-m+1}{n-m+1}\Big)
=\binomial{N}{n}^2.
\end{equation*}
\end{remark}

\begin{lemma}\label{la124376}
Let $n\in\NN$, $m\in\setn{n}=\{0,\dots,n\}$ 
and $w_{1,k},w_{2,k}\in\CC$ for $k\in\set{n}$. Then 
\begin{equation*} 
\Ab{\Coeff{y^m}{\prod_{k=1}^n(w_{1,k}+w_{2,k}y)}}
\leq \binomial{n}{m}
\Big(\frac{1}{n}\sum_{k=1}^n \ab{w_{2,k}}^2\Big)^{m/2}
\Big(\frac{1}{n}\sum_{k=1}^n\ab{w_{1,k}}^2\Big)^{(n-m)/2}.
\end{equation*}
\end{lemma}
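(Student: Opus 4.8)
The plan is to recognize the left-hand side as the value of a symmetric $n$-linear form on the two-dimensional complex Hilbert space $\CC^2$, and then to invoke Lemma~\ref{la61966}, since this is precisely the setting in which the polarization equality $\|g\|=\|\widehat g\|$ is \emph{sharp}. To set this up, I associate to a vector $v={^{\ttt}}(v_1,v_2)\in\CC^2$ the linear factor $v_1+v_2 y$ and define $g:\,(\CC^2)^n\longrightarrow\CC$ by
\[
g(v^{(1)},\dots,v^{(n)})=\Coeff{y^m}{\prod_{k=1}^n(v^{(k)}_1+v^{(k)}_2 y)}.
\]
Each factor is linear in $v^{(k)}$ and coefficient extraction is linear, so $g$ is $n$-linear; commutativity of the product makes it symmetric. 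Equipping $E=\CC^2$ with the standard inner product, Lemma~\ref{la61966} applies and gives $\|g\|=\|\widehat g\|$.

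Next I would compute the diagonal. Setting $v^{(k)}=v$ for all $k$ and using the binomial theorem yields $\widehat g(v)=\Coeff{y^m}{(v_1+v_2 y)^n}=\binomial{n}{m}v_1^{n-m}v_2^m$. Maximizing $\ab{\widehat g(v)}=\binomial{n}{m}\ab{v_1}^{n-m}\ab{v_2}^m$ subject to $\ab{v_1}^2+\ab{v_2}^2\le1$ is a weighted arithmetic--geometric mean problem with optimum at $\ab{v_1}^2=\frac{n-m}{n}$ and $\ab{v_2}^2=\frac{m}{n}$, so
\[
\|\widehat g\|=\binomial{n}{m}\Big(\tfrac{n-m}{n}\Big)^{(n-m)/2}\Big(\tfrac{m}{n}\Big)^{m/2}.
\]

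The remaining step converts this symmetric multilinear norm into the asymmetric estimate of the lemma via a scaling trick, which is needed because the Euclidean norm on $\CC^2$ treats $w_1$ and $w_2$ on equal footing whereas the target does not. I would apply $g$ to the rescaled columns $v^{(k)}={^{\ttt}}(t\,w_{1,k},\,s\,w_{2,k})$, $t,s>0$, and use the homogeneity $g(v^{(1)},\dots,v^{(n)})=t^{n-m}s^m\,\Coeff{y^m}{\prod_{k=1}^n(w_{1,k}+w_{2,k}y)}$ together with $\ab{g(v^{(1)},\dots,v^{(n)})}\le\|\widehat g\|\prod_k\|v^{(k)}\|$ and the inequality between arithmetic and geometric means. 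Writing $A=\frac1n\sum_k\ab{w_{2,k}}^2$ and $B=\frac1n\sum_k\ab{w_{1,k}}^2$, this gives
\[
t^{n-m}s^m\,\Ab{\Coeff{y^m}{\prod_{k=1}^n(w_{1,k}+w_{2,k}y)}}\le\|\widehat g\|\,(t^2 B+s^2 A)^{n/2}.
\]
Choosing $t=\big((n-m)/(nB)\big)^{1/2}$ and $s=\big(m/(nA)\big)^{1/2}$ makes $t^2B+s^2A=1$, and the factors $\big(\frac{n-m}{n}\big)^{(n-m)/2}\big(\frac{m}{n}\big)^{m/2}$ in $\|\widehat g\|$ cancel exactly against $t^{-(n-m)}s^{-m}$, leaving the claimed bound $\binomial{n}{m}A^{m/2}B^{(n-m)/2}$. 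The degenerate cases $m\in\{0,n\}$, $A=0$, or $B=0$, where this scaling is undefined, reduce directly to the arithmetic--geometric mean inequality (with the convention $0^0=1$) and I would dispatch them first.

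The crux I expect is realizing that the sharp constant $\binomial{n}{m}$ forces the use of Lemma~\ref{la61966} rather than the Cauchy-integral argument of Lemma~\ref{la2655}. Mimicking that argument, i.e.\ writing the coefficient by Cauchy's formula, taking a supremum over the phase, and controlling the cross term $\sum_k\overline{w_{1,k}}w_{2,k}$ by the Cauchy--Schwarz inequality, only produces the weaker prefactor $\frac{n^n}{(n-m)^{n-m}m^m}\ge\binomial{n}{m}$. It is exactly the sharpness of the polarization identity $\|g\|=\|\widehat g\|$ that eliminates this loss and delivers the binomial coefficient.
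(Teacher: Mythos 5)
Your proof is correct, but it takes a genuinely different route from the paper's. The paper's own proof is elementary: it expands the coefficient as $\sum_{K\subseteq\set{n}:\,\card{K}=m}\big(\prod_{k\in K}w_{2,k}\big)\prod_{k\in\set{n}\setminus K}w_{1,k}$, applies the Cauchy--Schwarz inequality to split this into the square roots of the elementary symmetric functions $\coeff{y^m}{\prod_{k=1}^n(1+\ab{w_{2,k}}^2y)}$ and $\coeff{y^{n-m}}{\prod_{k=1}^n(1+\ab{w_{1,k}}^2y)}$, and then bounds each of these by Maclaurin's inequality (Hardy--Littlewood--P\'olya, Theorem~52), which delivers the factor $\binomial{n}{m}$ directly. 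You instead realize the coefficient as a symmetric $n$-linear form on $E=\CC^2$, invoke Lemma~\ref{la61966} to reduce to the diagonal, compute $\|\widehat g\|=\binomial{n}{m}\big(\frac{n-m}{n}\big)^{(n-m)/2}\big(\frac{m}{n}\big)^{m/2}$ exactly, and recover the asymmetric bound via the two-parameter rescaling $(t,s)$ together with the AM--GM step $\prod_k\|v^{(k)}\|\leq\big(\frac1n\sum_k\|v^{(k)}\|^2\big)^{n/2}$; I checked the homogeneity identity, the normalization $t^2B+s^2A=1$, the exact cancellation of the diagonal-maximum factors, and your separate dispatch of the degenerate cases $m\in\{0,n\}$, $A=0$, $B=0$ --- all correct. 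Your route uses heavier machinery but is conceptually uniform with the paper's proof of Lemma~\ref{la2655} (same polarization lemma, same AM--GM device), and it buys the exact value of $\|\widehat g\|$, confirming that the lemma's constant is attained (take all $w_{1,k}$ equal and all $w_{2,k}$ equal). One correction to your closing remark, though: the sharp constant $\binomial{n}{m}$ does \emph{not} force the use of Lemma~\ref{la61966}. The paper's Cauchy--Schwarz-plus-Maclaurin argument attains exactly the same binomial coefficient with no polarization and no Cauchy integral, so while a naive Cauchy-integral mimicry of Lemma~\ref{la2655} would indeed lose to the prefactor $\frac{n^n}{(n-m)^{n-m}m^m}$, the polarization identity is sufficient here but by no means necessary.
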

\begin{proof}
Using Cauchy's inequality, we obtain
\begin{align*}
\lefteqn{\Ab{\Coeff{y^m}{\prod_{k=1}^n(w_{1,k}+w_{2,k}y)}}
=\Ab{
\sum_{K\subseteq\set{n}:\,\card{K}=m}
\Big(\prod_{k\in K}w_{2,k}\Big)\prod_{k\in\set{n}\setminus K}
w_{1,k}}}\\
&\leq\Big(\sum_{K\subseteq\set{n}:\,\card{K}=m}
\prod_{k\in K}\ab{w_{2,k}}^2\Big)^{1/2}
\Big(\sum_{K\subseteq\set{n}:\,\card{K}=m}
\prod_{k\in\set{n}\setminus K}
\ab{w_{1,k}}^2\Big)^{1/2}\\
&= \Coeff{y^m}{\prod_{k=1}^n(1+
\ab{w_{2,k}}^2 y)}^{1/2}
\Coeff{y^{n-m}}{\prod_{k=1}^n(1+
\ab{w_{1,k}}^2 y)}^{1/2}.
\end{align*}
The assertion now follows from a result due to Maclaurin, 
which says that if $g_1,\dots,g_n\in[0,\infty)$, then
$\big(\frac{1}{\binomial{n}{\ell}}\coeff{y^\ell}{
\prod_{k=1}^n(1+g_ky)}\big)^{1/\ell}$
is non-increasing in $\ell\in\set{n}$, see 
Hardy \emph{et al.} \cite[Theorem~52, page~52]{HLP52}. 
\end{proof}
\begin{lemma}\label{la31765}
Let $n,N\in\NN$, $m\in\Zpl$ with $m\leq \min\{n,N\}$, 
$(a_{j,k})\in\CC^{N\times n}$ with $\sum_{j=1}^Na_{j,k}=0$ 
for all $k\in\set{n}$, $b\in\CC^n$, 
$\alpha=\frac{1}{nN}\sum_{j=1}^N\sum_{k=1}^n \ab{a_{j,k}}^2$,
 $\beta=\frac{1}{n}\sum_{k=1}^n\ab{b_k}^2$.
Then
\begin{align*}
\Ab{\Coeff{x_1\cdots x_n}{\Big(\sum_{k=1}^nb_kx_k\Big)^{n-m}
\prod_{j=1}^N\Big(1+\sum_{k=1}^n a_{j,k}x_k\Big)}} 
\leq  \frac{n!N^{N/2}\alpha^{m/2}\beta^{(n-m)/2}}{
(N-m)^{(N-m)/2}m^{m/2}}.
\end{align*}

\end{lemma}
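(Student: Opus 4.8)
The plan is to extract the coefficient of $x_1\cdots x_n$ by distributing the factor $\big(\sum_{k=1}^n b_k x_k\big)^{n-m}$ across a choice of which $n-m$ of the variables it supplies, and then to apply Lemma \ref{la2655} to the remaining block. Concretely, expanding the $(n-m)$-th power by the multinomial theorem produces a sum over the ways of selecting an ordered tuple of indices from $\set{n}$; since we only want each $x_k$ to the first power in the final coefficient, the surviving contributions come from choosing a subset $L\subseteq\set{n}$ with $\card{L}=n-m$, assigning one factor $b_k x_k$ to each $k\in L$ (with the combinatorial multiplicity $(n-m)!$ from the ordering), and then requiring the product $\prod_{j=1}^N\big(1+\sum_{k}a_{j,k}x_k\big)$ to supply exactly the remaining variables $x_k$ for $k\in\set{n}\setminus L=:K$, $\card{K}=m$. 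Thus
\begin{align*}
\Coeff{x_1\cdots x_n}{\Big(\sum_{k=1}^n b_k x_k\Big)^{n-m}
\prod_{j=1}^N\Big(1+\sum_{k=1}^n a_{j,k}x_k\Big)}
=(n-m)!\sum_{\substack{K\subseteq\set{n}\\ \card{K}=m}}
\Big(\prod_{k\in\set{n}\setminus K}b_k\Big)\Per{A_K},
\end{align*}
where $A_K=(a_{j,k})_{j\in\set{N},\,k\in K}\in\CC^{N\times m}$ is the submatrix of columns indexed by $K$.

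First I would verify this expansion carefully, since getting the combinatorial factor right is where the bookkeeping lives. Each column $A_K$ still has zero column sums by hypothesis, so Lemma \ref{la2655} applies to $\Per{A_K}$ with $n$ there replaced by $m$, giving
\begin{align*}
\ab{\Per{A_K}}\leq
\frac{m!\,N^{N/2}}{(N-m)^{(N-m)/2}m^{m/2}}
\prod_{k\in K}\Big(\frac{1}{N}\sum_{j=1}^N\ab{a_{j,k}}^2\Big)^{1/2}.
\end{align*}
Inserting this bound and using $\ab{\prod_{k\in\set{n}\setminus K}b_k}
=\prod_{k\in\set{n}\setminus K}\ab{b_k}$, the triangle inequality reduces everything to estimating
\begin{align*}
\sum_{\substack{K\subseteq\set{n}\\ \card{K}=m}}
\Big(\prod_{k\in K}\gamma_k\Big)
\prod_{k\in\set{n}\setminus K}\ab{b_k},
\qquad
\gamma_k:=\Big(\frac{1}{N}\sum_{j=1}^N\ab{a_{j,k}}^2\Big)^{1/2}.
\end{align*}

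This last sum is exactly $\coeff{y^m}{\prod_{k=1}^n(\ab{b_k}+\gamma_k y)}$, so I would invoke Lemma \ref{la124376} with $w_{1,k}=\ab{b_k}$ and $w_{2,k}=\gamma_k$. That bounds the sum by
$\binomial{n}{m}\big(\frac{1}{n}\sum_{k}\gamma_k^2\big)^{m/2}
\big(\frac{1}{n}\sum_{k}\ab{b_k}^2\big)^{(n-m)/2}$, and by the definitions of $\alpha$ and $\beta$ one has $\frac{1}{n}\sum_{k=1}^n\gamma_k^2=\alpha$ and $\frac{1}{n}\sum_{k=1}^n\ab{b_k}^2=\beta$. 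Collecting the prefactors $(n-m)!$, $m!$, and $\binomial{n}{m}=\frac{n!}{m!\,(n-m)!}$ telescopes to a single $n!$, leaving precisely
$\frac{n!\,N^{N/2}\alpha^{m/2}\beta^{(n-m)/2}}{(N-m)^{(N-m)/2}m^{m/2}}$, as claimed. The main obstacle I anticipate is purely the opening combinatorial identity: ensuring that the multinomial expansion of the power, once restricted to square-free monomials, collapses to the stated sum over subsets with the correct multiplicity $(n-m)!$ and no double counting. Everything after that is a mechanical chain of Lemma \ref{la2655}, Lemma \ref{la124376}, and factorial arithmetic.
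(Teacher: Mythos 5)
Your proof is correct and follows essentially the same route as the paper: expand over subsets to isolate the square-free contributions of $\big(\sum_k b_kx_k\big)^{n-m}$ with multiplicity $(n-m)!$, bound each remaining block via Lemma~\ref{la2655}, and control the resulting elementary symmetric sum $\coeff{y^m}{\prod_{k=1}^n(\ab{b_k}+\sqrt{\alpha_k}\,y)}$ by Lemma~\ref{la124376}. The only cosmetic difference is that you name the inner coefficient as the permanent of the column submatrix $A_K$ (via (\ref{eq1326})), whereas the paper manipulates the coefficients directly, which is the same thing.
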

\begin{proof}
Let 
$\alpha_k=\frac{1}{N}\sum_{j=1}^N\ab{a_{j,k}}^2$, $(k\in\set{n})$.
An application of Lemma~\ref{la2655} gives
\begin{align*}
\lefteqn{\frac{1}{(n-m)!}
\Ab{\Coeff{x_1\cdots x_n}{
\Big(\sum_{k=1}^nb_kx_k\Big)^{n-m}
\prod_{j=1}^N\Big(1+\sum_{k=1}^n a_{j,k}x_k\Big)}}} \\
&=\Ab{\sum_{K\subseteq\set{n}:\;\card{K}=n-m}
\Coeff{x_1\cdots x_n}{\Big(\prod_{k\in K}(b_kx_k)\Big)
\prod_{j=1}^N\Big(1+\sum_{k=1}^n a_{j,k}x_k\Big)}} \\
&=\Ab{\sum_{K\subseteq\set{n}:\;\card{K}=n-m}
\Coeff{\prod_{k\in\set{n}\setminus K}x_k}{
\prod_{j=1}^N\Big(1+\sum_{k\in\set{n}\setminus K}
a_{j,k}x_k\Big)}\prod_{k\in K}b_k} \\
&\leq\frac{m!N^{N/2}}{(N-m)^{(N-m)/2}m^{m/2}}
\sum_{K\subseteq\set{n}:\;\card{K}=n-m}
\Big(\prod_{k\in\set{n}\setminus K} \sqrt{\alpha_k}\Big)
\prod_{k\in K}\ab{b_k}\nonumber\\
&=\frac{m!N^{N/2}}{(N-m)^{(N-m)/2}m^{m/2}}
\Coeff{y^m}{\prod_{k=1}^n(\ab{b_k}+\sqrt{\alpha_k}y)}.
\end{align*}
The proof is easily completed using Lemma \ref{la124376}.
\end{proof}

\begin{lemma}\label{la287765}
For $r\in\Zpl$, $t,x\in[0,1]$, we have 
$\sum_{m=0}^r(m+1)^tx^m\leq(\frac{1-x^{r+1}}{1-x})^{1+t}$.
\end{lemma}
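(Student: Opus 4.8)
The plan is to recognize the right-hand side as a power of the partial geometric sum and then reduce everything to a single quadratic comparison via Hölder's inequality. Write $S=\sum_{m=0}^r x^m$, so that $S=\frac{1-x^{r+1}}{1-x}$ for $x\in[0,1)$ and $S=r+1$ for $x=1$; in either case the asserted bound reads $\sum_{m=0}^r(m+1)^tx^m\leq S^{1+t}$. The case $t=0$ is immediate (both sides equal $S$), so after disposing of it I would assume $t\in(0,1]$ and treat the remaining content through the two steps below.

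The crux is the elementary quadratic estimate
\begin{align*}
\sum_{m=0}^r(m+1)x^m\leq S^2.
\end{align*}
To see this, I would expand $S^2=\big(\sum_{m=0}^r x^m\big)^2=\sum_{k=0}^{2r}c_kx^k$, where $c_k$ counts the pairs $(i,j)$ with $0\leq i,j\leq r$ and $i+j=k$; thus $c_k=k+1$ for $0\leq k\leq r$ and $c_k=2r+1-k$ for $r<k\leq 2r$. Since the coefficients of $\sum_{m=0}^r(m+1)x^m$ match the low-order part exactly, subtracting gives $S^2-\sum_{m=0}^r(m+1)x^m=\sum_{k=r+1}^{2r}(2r+1-k)x^k$, which is nonnegative because $x\geq0$ and each coefficient $2r+1-k$ is positive for $k\leq 2r$. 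This coefficient bookkeeping is the only step with any genuine content, and it already settles the case $t=1$ directly.

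With this in hand I would finish, for $t\in(0,1)$, by Hölder's inequality. Writing $(m+1)^tx^m=\big((m+1)x^m\big)^t\big(x^m\big)^{1-t}$ and applying Hölder with conjugate exponents $1/t$ and $1/(1-t)$ gives
\begin{align*}
\sum_{m=0}^r(m+1)^tx^m
\leq\Big(\sum_{m=0}^r(m+1)x^m\Big)^t\Big(\sum_{m=0}^r x^m\Big)^{1-t}
\leq (S^2)^t\,S^{1-t}=S^{1+t},
\end{align*}
where the second inequality uses the quadratic estimate. I expect no further obstacle: the whole argument hinges on the quadratic comparison, with Hölder supplying the interpolation between the exponents $t=0$ and $t=1$. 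A purely real-variable alternative would replace Hölder by Jensen's inequality for the concave map $u\mapsto u^t$ against the weights $x^m/S$, but the Hölder formulation seems cleanest and avoids any normalization bookkeeping.
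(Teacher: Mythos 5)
Your proof is correct and takes essentially the same route as the paper: Roos likewise reduces the statement to the quadratic bound $\sum_{m=0}^r(m+1)x^m\leq\bigl(\frac{1-x^{r+1}}{1-x}\bigr)^2$ and then interpolates with exactly the same H\"older inequality, $\sum_{m=0}^r(m+1)^tx^m\leq\bigl(\sum_{m=0}^r(m+1)x^m\bigr)^{t}\bigl(\sum_{m=0}^rx^m\bigr)^{1-t}$. The only (cosmetic) difference is in verifying the quadratic step: the paper uses the closed form $\sum_{m=0}^r(m+1)x^m=\frac{1-(r+2)x^{r+1}+(r+1)x^{r+2}}{(1-x)^2}$ and an algebraic comparison, whereas your Cauchy-product coefficient bookkeeping is equally valid and has the small advantage of covering $x=1$ without any limiting argument.
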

\begin{proof}
This follows from 
$\sum_{m=0}^r(m+1)x^m
=\frac{1-(r+2)x^{r+1}+(r+1)x^{r+2}}{(1-x)^2}
\leq(\frac{1-x^{r+1}}{1-x})^{2}$ and 
H\"older's inequality, i.e.\ 
$\sum_{m=0}^r(m+1)^tx^m\leq
(\sum_{m=0}^r(m+1)x^m)^{t}(\sum_{m=0}^rx^m)^{1-t}$. 
\end{proof}

\noindent
The following lemma is more precise than Lemma 3 in \cite{roo00}.
\begin{lemma}\label{la4866}
Let $\ell,m,N\in\NN$, $\ell\leq m\leq N$ and
$C_\ell= \big(\frac{\ee^\ell\ell!}{\ell^{\ell+1/2}}\big)^{1/2}$. Then
\begin{align}\label{eq1987}
\frac{N^{N/2}}{(N-m)^{(N-m)/2}m^{m/2+1/4}\binomial{N}{m}^{1/2}}
\leq C_\ell.
\end{align}
\end{lemma}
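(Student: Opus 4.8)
The plan is to square the inequality and reduce everything to the monotonicity of a single Stirling-type quantity. Writing $\phi(n)=\frac{\ee^n n!}{n^{n+1/2}}$ for $n\in\NN$, we have $C_\ell^2=\phi(\ell)$, and expanding $\binomial{N}{m}=\frac{N!}{m!(N-m)!}$ shows that the square of the left-hand side of (\ref{eq1987}) equals
\[
\frac{N^N m!\,(N-m)!}{(N-m)^{N-m}m^{m+1/2}N!}
=\frac{m!}{m^{m+1/2}}\cdot\frac{N^N(N-m)!}{(N-m)^{N-m}N!}.
\]
Thus (\ref{eq1987}) is equivalent to bounding this quantity by $\phi(\ell)$.

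First I would reduce to the case $\ell=m$. A short computation gives $\frac{\phi(n+1)}{\phi(n)}=\ee\,(1+\frac1n)^{-(n+1/2)}$, which is $<1$ precisely because $(1+\frac1n)^{n+1/2}>\ee$; hence $\phi$ is non-increasing, and since $\ell\le m$ we get $\phi(\ell)\ge\phi(m)=\frac{\ee^m m!}{m^{m+1/2}}$. It therefore suffices to prove the squared inequality with $\phi(\ell)$ replaced by $\phi(m)$, and after cancelling the common factor $\frac{m!}{m^{m+1/2}}$ this is exactly
\[
\frac{N^N(N-m)!}{(N-m)^{N-m}N!}\le\ee^m,
\]
where $(N-m)^{N-m}=1$ when $m=N$ by the convention $0^0=1$.

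To establish this last inequality I would set $\psi(n)=\frac{\ee^n n!}{n^n}$ for $n\in\NN$ and $\psi(0)=1$; then, using $\frac{N^N}{N!}=\frac{\ee^N}{\psi(N)}$ and $\frac{(N-m)!}{(N-m)^{N-m}}=\frac{\psi(N-m)}{\ee^{N-m}}$, the displayed inequality reads $\ee^m\frac{\psi(N-m)}{\psi(N)}\le\ee^m$, i.e.\ $\psi(N-m)\le\psi(N)$. Since $\frac{\psi(n+1)}{\psi(n)}=\ee\,(1+\frac1n)^{-n}>1$ (because $(1+\frac1n)^n<\ee$) and $\psi(1)=\ee>1=\psi(0)$, the sequence $\psi$ is increasing; as $N-m\le N$ this yields $\psi(N-m)\le\psi(N)$ and finishes the proof.

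The only genuine input is the pair of elementary bounds $(1+\frac1n)^n<\ee<(1+\frac1n)^{n+1/2}$. The left one is immediate from $n\log(1+\frac1n)<1$. For the right one, which governs the decreasing direction of $\phi$ and is the sharper of the two, I would write $1+\frac1n=\frac{1+x}{1-x}$ with $x=\frac1{2n+1}$ and use $\log\frac{1+x}{1-x}=2(x+\frac{x^3}3+\cdots)>2x$, so that $(n+\frac12)\log(1+\frac1n)>\frac{2n+1}2\cdot\frac2{2n+1}=1$. I expect this estimate, together with the bookkeeping of the $0^0$ edge case at $m=N$ (where $\psi(N-m)=\psi(0)=1$), to be the only points needing care.
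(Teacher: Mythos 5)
Your proof is correct, including the two points that need care: the $0^0=1$ edge case at $m=N$ (absorbed into $\psi(0)=1$) and the strict bound $\ee<(1+\frac{1}{n})^{n+1/2}$ that drives the decrease of $\phi$. It shares the paper's overall strategy -- both arguments square the inequality and split it into (a) the bound $\frac{N^N}{(N-m)^{N-m}\binomial{N}{m}}\leq \ee^m m!$ and (b) the fact that $\frac{\ee^m m!}{m^{m+1/2}}$ is non-increasing in $m$ -- but you establish each half differently. For (a), the paper fixes $m$, shows $p(m,N)=\frac{N^{N}}{(N-m)^{N-m}\binomial{N}{m}}$ is non-decreasing in $N$ via the decrease of $q(k)=(\frac{k}{k+1})^k$, and then passes to the limit $\lim_{\widetilde{N}\to\infty}p(m,\widetilde{N})=\ee^m m!$; you instead telescope the one-variable sequence $\psi(n)=\frac{\ee^n n!}{n^n}$, whose increase gives $\psi(N-m)\leq\psi(N)$, which is exactly (a) after your cancellation -- a finite argument with no limit passage and no need to evaluate that limit. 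For (b), the paper cites Mitrinovi\'{c}, whereas you prove the monotonicity of $\phi(n)=\frac{\ee^n n!}{n^{n+1/2}}$ from scratch via the series for $\log\frac{1+x}{1-x}$ with $x=\frac{1}{2n+1}$, making the lemma self-contained. The elementary inputs are essentially the same on both routes: your two-sided bounds $(1+\frac{1}{n})^{n}<\ee<(1+\frac{1}{n})^{n+1/2}$ correspond, on the paper's side, to the monotonicity of $(1+\frac{1}{k})^{k}$ (equivalently of $q$) and to the cited decrease of $C_m$. What your version buys is self-containedness and the elimination of the limiting step; the paper's version is shorter on the page because it outsources (b) to a reference.
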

\begin{proof}
Let $p(m,N)=\frac{N^{N}}{(N-m)^{N-m}\binomial{N}{m}}$.
Since $q(k):=(\frac{k}{k+1})^k$ is decreasing in $k\in\Zpl$, we have 
\begin{align*}
\frac{p(m,N)}{p(m,N+1)}=\frac{N^N(N+1-m)^{N+1-m}(N+1)}{(N+1)^{N+1}
(N-m)^{N-m}(N+1-m)}=\frac{q(N)}{q(N-m)}\leq 1.
\end{align*}
Hence 
$p(m,N)\leq \lim_{\widetilde{N}\to\infty}p(m,\widetilde{N})=\ee^m m!$ 
and therefore the left-hand side of (\ref{eq1987}) is bounded by
$(\frac{\ee^mm!}{m^{m+1/2}})^{1/2}$. Since this is 
decreasing in $m$ (cf.\ Mitrinovi\'{c} \cite[p.\ 183]{mit70}), 
the assertion follows. 
\end{proof}

\noindent
We now present our first main result, which generalizes Theorem A. 
Indeed, it will turn out that $\gamma\leq\frac{n}{N}$ and, for 
$\ell=1$, $H_\ell(Z)=\prod_{k=1}^n\widetilde{z}_k$, see the 
Remarks~\ref{rem474} and~\ref{rem76345} below. 
A further advantage of $\gamma$
is that it can be equal to zero, namely in the case 
$z_{j,k}=\widetilde{z}_k$ for all $j\in\set{N}$ and $k\in\set{n}$. 
We note that the singularity in (\ref{eq7435}) can be 
removed, see Theorem~\ref{th12} below. 

\begin{theorem}\label{th2994}
Let $N\in\NN$, $n\in\set{N}$, $\ell\in\set{n}$ and 
$Z=(z_{j,k})\in\CC^{N\times n}$. 
For $j\in\set{N}$ and $k\in\set{n}$, we assume that 
$\ab{z_{j,k}}\leq 1$ and set 
$\widetilde{z}_k=\frac{1}{N}\sum_{j=1}^Nz_{j,k}$,
$a_{j,k}=z_{j,k}-\widetilde{z}_k$, 
$U_j(x)=\sum_{k=1}^n a_{j,k}x_k$, 
where $x=(x_1,\dots,x_n)$ is an indeterminate. 
Further, let $C_\ell$ be as in Lemma~\ref{la4866},
\begin{align*}
\alpha&=\frac{1}{nN}\sum_{j=1}^N\sum_{k=1}^n\ab{a_{j,k}}^2
,\quad \beta=\frac{1}{n}\sum_{k=1}^n\ab{\widetilde{z}_k}^2,
\quad \gamma=\frac{n\alpha}{N}\min\Big\{n,\,\frac{1}{1-\beta}\Big\},\\
G_m(Z)&=\frac{(N-m)!}{(n-m)!N!}\Coeff{x_1\cdots x_n}{
\Big(\prod_{j=1}^N(1+U_j(x))\Big)
\Big(\sum_{k=1}^n\widetilde{z}_kx_k\Big)^{n-m}}
\end{align*}
for $m\in\setn{n}$ and set $H_\ell(Z)=\sum_{m=0}^\ell G_m(Z)$.
If $\gamma<1$, then
\begin{align}\label{eq7435}
\Ab{\frac{(N-n)!}{N!}\Per{Z} -H_\ell(Z)}
\leq (\ell+1)^{1/4}C_{\ell+1}\,
\frac{\gamma^{(\ell+1)/2}}{(1-\gamma)^{3/4}}.
\end{align}
\end{theorem}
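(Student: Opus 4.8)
The plan is to replace Bobkov's induction by an \emph{exact} finite expansion of $\frac{(N-n)!}{N!}\Per{Z}$ and to estimate its tail termwise.

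\textbf{Step 1 (exact expansion).} Writing $z_{j,k}=\widetilde z_k+a_{j,k}$ and $V(x)=\sum_{k=1}^n\widetilde z_kx_k$, identity (\ref{eq1326}) reads $\Per{Z}=\Coeff{x_1\cdots x_n}{\prod_{j=1}^N(1+U_j(x)+V(x))}$. I would expand the product by choosing in each factor either $1+U_j(x)$ or $V(x)$, and group the terms by the number $s$ of factors contributing $V(x)$. Extracting the coefficient of $x_1\cdots x_n$ forces the selected $U$-factors to have total degree $n-s$; with $m=n-s$, a fixed monomial $\prod_{j\in T}U_j(x)$ with $\card{T}=m$ arises with multiplicity $\binomial{N-m}{n-m}$ (the number of ways to place the $V$-factors among the remaining $N-m$ indices). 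Summing over $T$ reassembles $\prod_{j=1}^N(1+U_j(x))$, and since $\frac{(N-n)!}{N!}\binomial{N-m}{n-m}=\frac{(N-m)!}{(n-m)!N!}$ this yields the identity $\frac{(N-n)!}{N!}\Per{Z}=\sum_{m=0}^nG_m(Z)$. Hence the left-hand side of (\ref{eq7435}) equals $\ab{\sum_{m=\ell+1}^nG_m(Z)}\le\sum_{m=\ell+1}^n\ab{G_m(Z)}$. I expect this bookkeeping (matching the multiplicity with the prefactor of $G_m$) to be the main conceptual step.

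\textbf{Step 2 (termwise bound).} Since the columns of $(a_{j,k})$ sum to $0$, Lemma~\ref{la31765} with $b_k=\widetilde z_k$ bounds $\ab{\Coeff{x_1\cdots x_n}{V(x)^{n-m}\prod_{j=1}^N(1+U_j(x))}}$. Inserting the prefactor of $G_m$ and applying Lemma~\ref{la4866} with index $\ell+1$ (legitimate since $m\ge\ell+1$) to the factor $N^{N/2}/\big((N-m)^{(N-m)/2}m^{m/2}\big)$, I arrive at
\begin{align*}
\ab{G_m(Z)}\le C_{\ell+1}\,m^{1/4}\,\frac{n!/(n-m)!}{\sqrt{m!\,N!/(N-m)!}}\,\alpha^{m/2}\beta^{(n-m)/2}.
\end{align*}

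\textbf{Step 3 (introducing $\gamma$).} From $\frac1N\sum_j\ab{a_{j,k}}^2\le 1-\ab{\widetilde z_k}^2$ one gets $\alpha\le1-\beta$. I would then bound the quotient above in two ways, according to which term realizes the minimum in $\gamma$. Using $\frac{n!/(n-m)!}{N!/(N-m)!}\le(n/N)^m$: if $\gamma=\frac{n\alpha}{N(1-\beta)}$, factoring $\alpha^{m/2}=(1-\beta)^{m/2}\big(\tfrac{\alpha}{1-\beta}\big)^{m/2}$ gives $\ab{G_m(Z)}\le C_{\ell+1}m^{1/4}\sqrt{w_m}\,\gamma^{m/2}$ with $w_m=\binomial{n}{m}(1-\beta)^m\beta^{n-m}$; if $\gamma=\frac{n^2\alpha}{N}$, then $\frac{n!/(n-m)!}{\sqrt{N!/(N-m)!}}\le n^m/N^{m/2}$ together with $\beta\le1$ gives the same form with $w_m=1/m!$. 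In either case $\sum_{m\ge\ell+1}w_m\le1$ (binomial theorem, resp.\ $\sum_{m\ge2}1/m!=\ee-2<1$, using $\ell\ge1$).

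\textbf{Step 4 (summation).} By the Cauchy--Schwarz inequality,
\begin{align*}
\sum_{m=\ell+1}^n\ab{G_m(Z)}\le C_{\ell+1}\Big(\sum_{m=\ell+1}^n m^{1/2}\gamma^m\Big)^{1/2}\Big(\sum_{m=\ell+1}^n w_m\Big)^{1/2},
\end{align*}
and the last factor is at most $1$. After the shift $m=\ell+1+p$ and the elementary bound $(\ell+1+p)^{1/2}\le(\ell+1)^{1/2}(p+1)^{1/2}$, Lemma~\ref{la287765} with $t=\tfrac12$ and $x=\gamma$ gives $\sum_{m\ge\ell+1}m^{1/2}\gamma^m\le(\ell+1)^{1/2}\gamma^{\ell+1}(1-\gamma)^{-3/2}$; taking the square root produces exactly $(\ell+1)^{1/4}C_{\ell+1}\gamma^{(\ell+1)/2}(1-\gamma)^{-3/4}$, which is (\ref{eq7435}). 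The delicate point is that the exponent $\tfrac34$ in the denominator forces splitting off $\sqrt{w_m}$ via Cauchy--Schwarz and then applying Lemma~\ref{la287765} with $t=\tfrac12$; a direct geometric summation of $m^{1/4}\gamma^{m/2}$ would only yield the weaker exponent $\tfrac54$ in $(1-\sqrt\gamma)$.
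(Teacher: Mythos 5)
Your proposal is correct and takes essentially the same route as the paper's own proof: the exact identity $\frac{(N-n)!}{N!}\Per{Z}=\sum_{m=0}^n G_m(Z)$ (which the paper obtains via $W_m(x)=\coeff{y^m}{\prod_{j=1}^N(1+U_j(x)y)}$ and the binomial theorem, equivalent to your subset bookkeeping with multiplicity $\binomial{N-m}{n-m}$), then the termwise bound from Lemmas~\ref{la31765} and~\ref{la4866}, the same introduction of $\gamma$ with weights satisfying $\sum_{m\geq\ell+1}w_m\leq 1$ (binomial theorem in one case, $\ee-2<1$ with $\ell\geq1$ in the other), Cauchy's inequality, and Lemma~\ref{la287765} with $t=\frac{1}{2}$. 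Your only deviations are cosmetic, e.g.\ splitting explicitly into two cases according to the minimum in $\gamma$ and using $w_m=1/m!$ where the paper keeps the single expression $\binomial{n}{m}\max\{1-\beta,\frac{1}{n}\}^m\beta^{n-m}$.
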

\begin{proof}
Let $W_m(x)=\coeff{y^m}{\prod_{j=1}^N(1+U_j(x)y)}$ for 
$m\in\setn{N}$. In view of (\ref{eq1326}),
\begin{align*}
\lefteqn{\prod_{j=1}^N\Big(1+\sum_{k=1}^nz_{j,k}x_k\Big)
=\prod_{j=1}^N\Big(U_j(x)+1+\sum_{k=1}^n\widetilde{z}_kx_k\Big)}\\
&=\sum_{m=0}^NW_m(x)\Big(1+\sum_{k=1}^n\widetilde{z}_kx_k\Big)^{N-m}
=\sum_{m=0}^N\sum_{r=0}^{N-m}\binomial{N-m}{r}W_m(x)
\Big(\sum_{k=1}^n\widetilde{z}_kx_k\Big)^{r},
\end{align*}
and
\begin{align}\label{eq3376}
G_m(Z)&=\frac{(N-m)!}{(n-m)!N!}\Coeff{x_1\cdots x_n}{W_m(x)
\Big(\sum_{k=1}^n\widetilde{z}_kx_k\Big)^{n-m}},
\end{align}
we see that
\begin{align*}
\Per{Z}
&=\sum_{m=0}^n\binomial{N-m}{n-m}
\Coeff{x_1\cdots x_n}{W_m(x)
\Big(\sum_{k=1}^n\widetilde{z}_kx_k\Big)^{n-m}}\\
&=\frac{N!}{(N-n)!}\sum_{m=0}^nG_m(Z)
\end{align*}
and therefore $\frac{(N-n)!}{N!}\Per{Z}=H_n(Z)$. 
Using Lemmas~\ref{la31765} and \ref{la4866} and the simple inequality 
$\binomial{n}{m}\leq\binomial{N}{m}(\frac{n}{N})^m$ 
for $m\in\setn{n}$, 
we obtain
\begin{align*}
\lefteqn{
\Ab{\frac{(N-n)!}{N!}\Per{Z}-H_\ell(Z)}
\leq \sum_{m=\ell+1}^n\ab{G_m(Z)}}\\
&\leq\sum_{m=\ell+1}^n
\frac{N^{N/2}}{(N-m)^{(N-m)/2}m^{m/2}}
\frac{\binomial{n}{m}}{\binomial{N}{m}}\alpha^{m/2}\beta^{(n-m)/2}\\
&\leq C_{\ell+1}\sum_{m=\ell+1}^n
\frac{\binomial{n}{m}}{\binomial{N}{m}^{1/2}}m^{1/4}
\alpha^{m/2}\beta^{(n-m)/2}\\
&\leq C_{\ell+1} \sum_{m=\ell+1}^n m^{1/4}\gamma^{m/2}
\Big(\binomial{n}{m}\max\Big\{1-\beta, \frac{1}{n} \Big\}^m
\beta^{n-m}\Big)^{1/2},
\end{align*}
where we used that $\beta\in[0,1]$. By applying Cauchy's inequality
and the fact that, since $\ell\geq1$, 
$\sum_{m=\ell+1}^n\binomial{n}{m}\frac{1}{n^{m}}\leq 
(1+\frac{1}{n})^n-2\leq\ee-2< 1$, we obtain 
\begin{align*}
\Ab{\frac{(N-n)!}{N!}\Per{Z}-H_\ell(Z)}
&\leq C_{\ell+1} \Big(\sum_{m=\ell+1}^n
\sqrt{m}\gamma^{m}\Big)^{1/2}\\
&\leq (\ell+1)^{1/4}C_{\ell+1}\gamma^{(\ell+1)/2}
\Big(\sum_{m=0}^{n-\ell-1} \sqrt{m+1}\gamma^{m}\Big)^{1/2}.
\end{align*}
It remains to use Lemma \ref{la287765} with $t=\frac{1}{2}$.  
\end{proof}
\noindent
For the rest of the paper, 
let the notation of Theorem~\ref{th2994} hold. 

\begin{remark}\label{rem474} 
We have $\gamma\leq \frac{n}{N}$, since
\begin{align}\label{eq217}
\alpha=\frac{1}{nN}\sum_{j=1}^N\sum_{k=1}^n
\ab{z_{j,k}}^2-\beta\leq 1-\beta.
\end{align}
In particular, if $\ab{z_{j,k}}=1$ for all $j\in\set{N}$ and 
$k\in\set{n}$, then $\alpha=1-\beta$. Indeed, writing 
$z_{j,k}=u_{j,k}+\ii v_{j,k}$ and 
$\widetilde{z}_k=\widetilde{u}_k+\ii\widetilde{v}_k$ with 
$u_{j,k},v_{j,k}\in\RR$, 
$\widetilde{u}_k=\frac{1}{N}\sum_{j=1}^Nu_{j,k}$ and
$\widetilde{v}_k=\frac{1}{N}\sum_{j=1}^Nv_{j,k}$,  we obtain
\begin{align*}
\alpha
&=\frac{1}{nN}\sum_{j=1}^N\sum_{k=1}^n((u_{j,k}-\widetilde{u}_k)^2
+(v_{j,k}-\widetilde{v}_k)^2)\\
&=\frac{1}{nN}\sum_{k=1}^n\Big(\sum_{j=1}^N
(u_{j,k}^2+v_{j,k}^2)-N(\widetilde{u}_k^2+\widetilde{v}_k^2)\Big),
\end{align*}
from which (\ref{eq217}) follows. 
\end{remark}
Let us now collect some properties of the first few $G_m(Z)$, 
where we always assume that $m\in\setn{n}$. 

\begin{remark}\label{rem76345}
 
The first few $G_m(Z)$ can be evaluated as follows:
\begin{equation}\label{eq1756}
\begin{split}
G_0(Z)&=\prod_{k=1}^n\widetilde{z}_k,\quad
G_1(Z)=0,\\
G_2(Z)&=- \frac{(N-2)!}{N!}\sum_{K\subseteq\set{n}:\,\card{K}=2}
\Big(\sum_{j=1}^N\prod_{k\in K}a_{j,k}\Big)
\prod_{k\in\set{n}\setminus K}\widetilde{z}_k,
 \\
G_3(Z)&=2\frac{(N-3)!}{N!}\sum_{K\subseteq\set{n}:\,\card{K}=3}
\Big(\sum_{j=1}^N\prod_{k\in K}a_{j,k}\Big)
\prod_{k\in\set{n}\setminus K}\widetilde{z}_k. 
\end{split}
\end{equation}
In order to prove this, let 
\begin{align*}
V_m(x)=\sum_{j=1}^N(-U_j(x))^m,\quad
W_m(x)=\Coeff{y^m}{\prod_{j=1}^N(1+U_j(x)y)}
\end{align*}
for $m\in\setn{N}$. We have 
\begin{align*}
W_m(x)=-\frac{1}{m}\sum_{k=0}^{m-2}W_k(x)
V_{m-k}(x)\quad \mbox{ for }m\in\set{N},
\end{align*}
which can be shown in the same way as (10) in \cite{roo00}.
In particular, 
\begin{equation}\label{eq21756}
\begin{split}
W_0(x)&=1,\quad W_1(x)=0,\quad W_2(x)=-\frac{1}{2}V_2(x),
\\
W_3(x)&=-\frac{1}{3}V_3(x),\quad W_4(x)=\frac{1}{8}(V_2(x))^2
-\frac{1}{4}V_4(x). 
\end{split}
\end{equation}
In view of (\ref{eq3376}), (\ref{eq21756}) 
and
\begin{align}
\lefteqn{\Coeff{x_1\cdots x_n}{V_m(x)\Big(\sum_{k=1}^n
\widetilde{z}_kx_k\Big)^{n-m}}}\nonumber\\
&=(-1)^m\sum_{j=1}^N\Coeff{x_1\cdots x_n}{
(U_j(x))^m\Big(\sum_{k=1}^n\widetilde{z}_kx_k\Big)^{n-m}}\nonumber\\
&=(-1)^m(n-m)!\,m!\sum_{j=1}^N\sum_{K\subseteq\set{n}:\,\card{K}=m}
\Big(\prod_{k\in K}a_{j,k}\Big)
\prod_{k\in\set{n}\setminus K}\widetilde{z}_k,\label{eq17216}
\end{align}
for $m\in\set{n}$, we see that (\ref{eq1756}) is true. We note that 
the representations in (\ref{eq1756}) of $G_2(Z)$ and $G_3(Z)$ have 
a simple form, but the omitted ones of $G_m(Z)$ with $m\geq 4$ are 
more complicated.

From the above, we obtain that 
$H_1(Z)=\prod_{k=1}^n\widetilde{z}_k$ and, if $n\geq 2$,
\begin{equation}\label{eq71977}
H_2(Z)=\prod_{k=1}^n\widetilde{z}_k-\frac{1}{N(N-1)}
\sum_{K\subseteq\set{n}:\,\card{K}=2}
\Big(\sum_{j=1}^N\prod_{k\in K}a_{j,k}\Big)
\prod_{k\in\set{n}\setminus K}\widetilde{z}_k.
\end{equation}

\end{remark}

\begin{remark}
Let us derive some bounds for $\ab{G_2(Z)}$ and $\ab{G_3(Z)}$. 
From (\ref{eq17216}) and Lemma~\ref{la124376} 
it follows that, for $m\in\set{n}$, 
\begin{align*}
\lefteqn{\Ab{\Coeff{x_1\cdots x_n}{V_m(x)\Big(\sum_{k=1}^n
\widetilde{z}_kx_k\Big)^{n-m}}}}\\
&\leq(n-m)!\,m!\sum_{j=1}^N
\Ab{\Coeff{y^m}{\prod_{k=1}^n(\widetilde{z}_k+a_{j,k}y)}}\\
&\leq n!\sum_{j=1}^N\Big(\frac{1}{n}\sum_{k=1}^n
\ab{a_{j,k}}^2\Big)^{m/2}\beta^{(n-m)/2},
\end{align*}
which together with (\ref{eq1756}) gives
\begin{align*}
\ab{G_2(Z)}&\leq \frac{n(n-1)}{2(N-1)}\alpha\beta^{(n-2)/2},\\
\ab{G_3(Z)}&\leq \frac{1}{3}\frac{n!(N-3)!}{(n-3)!N!} 
\sum_{j=1}^N\Big(\frac{1}{n}\sum_{k=1}^n
\ab{a_{j,k}}^2\Big)^{3/2}\beta^{(n-3)/2}.
\end{align*}

The inequalities given above
can be used to derive bounds for 
$\ab{G_2(Z)}$ and $\ab{G_3(Z)}$ depending on $\gamma$. 
For precise calculations, we use the notation 
\begin{align*}
\gamma(d)=\frac{n\alpha}{N}\min\Big\{dn,\,\frac{1}{1-\beta}\Big\}
\end{align*} 
for $d\in(0,\infty)$, giving $\gamma=\gamma(1)$. We have 
\begin{align}
\ab{G_2(Z)}&\leq \gamma(1/2)
\max\Big\{\beta^{(n-2)/2},\,\frac{1}{2}n(1-\beta)\beta^{(n-2)/2}
\Big\}\frac{N(n-1)}{(N-1)n}\leq \gamma(1/2),\label{eq4236a}\\
\ab{G_3(Z)}&\leq \sqrt{3}\frac{(n-1)(n-2)N^2}{(N-1)(N-2)n^2}
\sum_{j=1}^N\Big(
\frac{1}{N^2}\sum_{k=1}^n\ab{a_{j,k}}^2\min\Big\{\frac{n}{3},
\frac{1}{1-\beta}\Big\}\Big)^{3/2}\nonumber\\
&\quad{}\times\max\Big\{\beta^{(n-3)/2},\,
\Big(\frac{n}{3}\Big)^{3/2}
(1-\beta)^{3/2}\beta^{(n-3)/2}\Big\}\nonumber\\
&\leq \sqrt{3}\sum_{j=1}^N\Big(
\frac{1}{N^2}\sum_{k=1}^n\ab{a_{j,k}}^2\min\Big\{\frac{n}{3},
\frac{1}{1-\beta}\Big\}\Big)^{3/2}.\label{eq4236}
\end{align}
We note that (\ref{eq4236}) implies that 
$\ab{G_3(Z)}$ is bounded by $\sqrt{3}(\gamma(1/3))^{3/2}$, 
which is however of worse order.

\end{remark}

\noindent
The following result is a consequence of Theorem~\ref{th2994}, 
(\ref{eq4236a}) and (\ref{eq4236}).
\begin{corollary}\label{cor387}
If $\gamma<1$, then
\begin{align}
\Ab{\frac{(N-n)!}{N!}\Per{Z} -\prod_{k=1}^n\widetilde{z}_k}
&\leq \gamma(1/2)
+\frac{3^{1/4}C_{3}\,\gamma^{3/2}}{(1-\gamma)^{3/4}},
\label{eq81776}\\
\Ab{\frac{(N-n)!}{N!}\Per{Z} -H_2(Z)}
&\leq \sqrt{3}\sum_{j=1}^N\Big(
\frac{1}{N^2}\sum_{k=1}^n\ab{a_{j,k}}^2\min\Big\{\frac{n}{3},
\frac{1}{1-\beta}\Big\}\Big)^{3/2}\nonumber\\
&\quad{}+\frac{2^{1/2}C_{4}\,\gamma^{2}}{(1-\gamma)^{3/4}},
\label{eq81777}
\end{align}
where the second inequality requires $n\geq 2$.
\end{corollary}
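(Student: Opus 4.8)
The plan is to derive both inequalities from Theorem~\ref{th2994} by peeling off the partial sums $H_\ell(Z)=\sum_{m=0}^\ell G_m(Z)$ one term at a time and absorbing the leftover $G_m$-term through its explicit pointwise bound. The algebraic facts I would exploit are that $H_\ell(Z)-H_{\ell-1}(Z)=G_\ell(Z)$, and, from Remark~\ref{rem76345}, that $G_0(Z)=\prod_{k=1}^n\widetilde{z}_k$ and $G_1(Z)=0$. Consequently $\prod_{k=1}^n\widetilde{z}_k=H_1(Z)=H_2(Z)-G_2(Z)$ and $H_2(Z)=H_3(Z)-G_3(Z)$. Each inequality then reduces to one application of Theorem~\ref{th2994} at the appropriate order $\ell$ together with one of the estimates (\ref{eq4236a}) or (\ref{eq4236}), chained by the triangle inequality.

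For (\ref{eq81776}) with $n\geq 2$, I would write
\begin{align*}
\frac{(N-n)!}{N!}\Per{Z}-\prod_{k=1}^n\widetilde{z}_k
=\Big(\frac{(N-n)!}{N!}\Per{Z}-H_2(Z)\Big)+G_2(Z),
\end{align*}
bound the first bracket by Theorem~\ref{th2994} with $\ell=2$ (so that $(\ell+1)^{1/4}=3^{1/4}$ and $C_{\ell+1}=C_3$, giving $3^{1/4}C_3\gamma^{3/2}(1-\gamma)^{-3/4}$), and bound $\ab{G_2(Z)}$ by $\gamma(1/2)$ via (\ref{eq4236a}). The triangle inequality then yields exactly (\ref{eq81776}).

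For (\ref{eq81777}) with $n\geq 3$, I would split analogously
\begin{align*}
\frac{(N-n)!}{N!}\Per{Z}-H_2(Z)
=\Big(\frac{(N-n)!}{N!}\Per{Z}-H_3(Z)\Big)+G_3(Z),
\end{align*}
apply Theorem~\ref{th2994} with $\ell=3$ (here $(\ell+1)^{1/4}=4^{1/4}=2^{1/2}$ and $C_{\ell+1}=C_4$, producing $2^{1/2}C_4\gamma^2(1-\gamma)^{-3/4}$), and bound $\ab{G_3(Z)}$ by the displayed sum using (\ref{eq4236}). Again the triangle inequality delivers (\ref{eq81777}).

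The point requiring the most care — and the only genuine obstacle — is the boundary cases in which Theorem~\ref{th2994} cannot be invoked at the required order, since it demands $\ell\in\set{n}$. For (\ref{eq81776}) this excludes $n=1$; but then the left-hand side vanishes because $\frac{(N-1)!}{N!}\Per{Z}=\frac{1}{N}\sum_{j=1}^Nz_{j,1}=\widetilde{z}_1$, and the (nonnegative) right-hand side makes the bound trivial. For (\ref{eq81777}) the excluded case is $n=2$; here $H_2(Z)=H_n(Z)=\frac{(N-n)!}{N!}\Per{Z}$, as shown inside the proof of Theorem~\ref{th2994}, so the left-hand side is again zero and the bound holds trivially. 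With these two degenerate cases dispatched, the telescoping argument covers the remaining ranges $n\geq 2$ and $n\geq 3$, completing the proof.
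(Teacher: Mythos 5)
Your proposal is correct and follows exactly the route the paper intends: the corollary is stated there as a direct consequence of Theorem~\ref{th2994} (applied at orders $\ell=2$ and $\ell=3$, yielding the constants $3^{1/4}C_3$ and $4^{1/4}C_4=2^{1/2}C_4$) combined with the bounds (\ref{eq4236a}) and (\ref{eq4236}) on $\ab{G_2(Z)}$ and $\ab{G_3(Z)}$, chained by the triangle inequality through $H_1(Z)=H_2(Z)-G_2(Z)$ and $H_2(Z)=H_3(Z)-G_3(Z)$. Your treatment of the degenerate cases $n=1$ and $n=2$ (where the left-hand sides vanish since $\frac{(N-n)!}{N!}\Per{Z}=H_n(Z)$) is a careful addition that the paper leaves implicit.
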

\begin{proof}[Proof of Theorem \ref{th363}]
Inequality (\ref{eq71665}) follows from (\ref{eq81776}) and
(\ref{eq892762}), while (\ref{eq71666}) can be easily be shown using
(\ref{eq81777}), (\ref{eq71977}) and the representation 
\begin{align*}
\lefteqn{\int f\,\dd(P^{Y_n}-Q_2)
=\int\Big(\frac{(N-n)!}{N!}\sum_{j\in\set{N}_{\neq}^n}
\prod_{k=1}^nf_k(X_{j_k}(\omega))-\prod_{k=1}^n\zeta_k(\omega)}
\qquad\\
&{}+\frac{1}{N(N-1)} 
\sum_{K\subseteq\set{n}:\,\card{K}=2}\sum_{j=1}^N
&\prod_{k\in K}\Big( f_k(X_j(\omega))-\zeta_k(\omega)\Big) 
\prod_{k\in\set{n}\setminus K}\zeta_k(\omega)\Big)\,\dd P(\omega) 
\end{align*}
for $f=\bigotimes_{k=1}^n f_k\in\calf_n$, where 
$\zeta_k(\omega)=\frac{1}{N}\sum_{j=1}^Nf_k(X_j(\omega))$.
\end{proof}

\noindent
We now show that the singularity in (\ref{eq7435}) can be removed. 
\begin{theorem}\label{th12}
For fixed $\ell\in\set{n}$, let $\kappa_\ell$ be the smallest 
absolute constant such that, without any restrictions on $\gamma$, 
\begin{align*}
\Ab{\frac{(N-n)!}{N!}\Per{Z} -H_\ell(Z)}
\leq \kappa_\ell\,\gamma^{(\ell+1)/2}.
\end{align*}
Then  
$\kappa_\ell\leq \frac{(\ell+1)^{1/4}C_{\ell+1}}{(1-x_\ell)^{3/4}}$,
where 
$x_\ell\in(0,1)$ is the unique positive solution of the 
equation 
\begin{align}\label{eq8546}
2+2^{1/4}C_{2}x\Big(\frac{1-x^{\ell-1}}{1-x}\Big)^{3/4}
=(\ell+1)^{1/4}C_{\ell+1}\,\frac{x^{(\ell+1)/2}}{(1-x)^{3/4}},
\quad(x\in(0,1)).
\end{align}
In particular, 
$\kappa_1\leq 3.57$, $\kappa_2\leq 5.53$ and $\kappa_3\leq7.08$.
\end{theorem}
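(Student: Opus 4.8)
The plan is to establish two things: first, that the claimed constant $\kappa_\ell$ satisfies the asserted upper bound; second, that the defining equation (\ref{eq8546}) has a unique solution $x_\ell\in(0,1)$. I would begin from Theorem~\ref{th2994}, which already gives, for $\gamma<1$, the bound $\ab{\frac{(N-n)!}{N!}\Per{Z}-H_\ell(Z)}\leq (\ell+1)^{1/4}C_{\ell+1}\,\gamma^{(\ell+1)/2}/(1-\gamma)^{3/4}$. Dividing through by $\gamma^{(\ell+1)/2}$, this shows $\kappa_\ell\leq (\ell+1)^{1/4}C_{\ell+1}/(1-\gamma)^{3/4}$ whenever $\gamma\leq$ some threshold, but the factor $(1-\gamma)^{-3/4}$ blows up as $\gamma\to1$. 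The whole point is to patch the regime $\gamma$ close to (and above) the threshold $x_\ell$ by a separate, cruder estimate, and then take the maximum of the two regimes.

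\textbf{The two-regime argument.}
For $\gamma\leq x_\ell$, the monotonicity of $(1-\gamma)^{-3/4}$ immediately yields $\ab{\frac{(N-n)!}{N!}\Per{Z}-H_\ell(Z)}\leq (\ell+1)^{1/4}C_{\ell+1}\,\gamma^{(\ell+1)/2}/(1-x_\ell)^{3/4}$, which is exactly the claimed bound on $\kappa_\ell$. For $\gamma\geq x_\ell$ (including the case $\gamma\geq1$, where Theorem~\ref{th2994} is silent), I would instead bound $\ab{\frac{(N-n)!}{N!}\Per{Z}-H_\ell(Z)}$ directly. The natural route is to split off the $m=0$ and $m=1$ terms using $G_0(Z)+G_1(Z)=\prod_{k=1}^n\widetilde{z}_k$ together with the trivial bound $\ab{\Per{Z}}\leq \frac{N!}{(N-n)!}$ and $\ab{\prod_k\widetilde{z}_k}\leq 1$, giving a contribution bounded by $2$; the remaining terms $G_2(Z),\dots,G_\ell(Z)$ are controlled via (\ref{eq4236a}) and the Lemma~\ref{la31765} / Lemma~\ref{la4866} machinery, producing the sum $2^{1/4}C_2\sum_{m=2}^\ell \gamma^{m/2}m^{1/4}$-type expression. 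Summing a geometric-like series and using Lemma~\ref{la287765} collapses this into the left-hand side of (\ref{eq8546}) after dividing by $\gamma^{(\ell+1)/2}$. The point $x_\ell$ is chosen precisely so that at $\gamma=x_\ell$ the two bounds agree, which is what equation (\ref{eq8546}) encodes.

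\textbf{Existence and uniqueness of $x_\ell$.}
To show (\ref{eq8546}) has a unique root in $(0,1)$, I would define $\phi(x)$ to be the right-hand side minus the left-hand side and analyze its behavior on $(0,1)$. As $x\to0^+$, the left-hand side tends to $2>0$ while the right-hand side tends to $0$, so $\phi(0^+)<0$; as $x\to1^-$, the right-hand side carries the factor $(1-x)^{-3/4}\to\infty$ while the left-hand side grows only like $(1-x)^{-3/4}$ multiplied by a bounded factor of smaller order, so $\phi(1^-)>0$. Continuity then gives at least one root. For uniqueness, the cleanest approach is to divide both sides by $(1-x)^{-3/4}$ (equivalently multiply by $(1-x)^{3/4}$) and show that the resulting difference is strictly monotone, or to verify that the ratio of right-hand side to left-hand side is strictly increasing; I expect this monotonicity check to be the main technical obstacle, since it requires controlling the competition between the polynomial-in-$x$ prefactors and the shared singular factor. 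The final numerical claims $\kappa_1\leq3.57$, $\kappa_2\leq5.53$, $\kappa_3\leq7.08$ then follow by solving (\ref{eq8546}) numerically for $\ell=1,2,3$, substituting $x_\ell$ into $(\ell+1)^{1/4}C_{\ell+1}/(1-x_\ell)^{3/4}$, and rounding up, where the constants $C_{\ell+1}=(\ee^{\ell+1}(\ell+1)!/(\ell+1)^{\ell+3/2})^{1/2}$ come from Lemma~\ref{la4866}.
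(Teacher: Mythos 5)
Your two-regime architecture is the paper's: for $\gamma\leq x_\ell$ apply Theorem~\ref{th2994} together with $(1-\gamma)^{-3/4}\leq(1-x_\ell)^{-3/4}$, and for $\gamma>x_\ell$ use the crude bound
\begin{align*}
\Ab{\frac{(N-n)!}{N!}\Per{Z}-H_\ell(Z)}
&\leq \frac{(N-n)!}{N!}\ab{\Per{Z}}+\ab{H_\ell(Z)}
\leq 2+\sum_{m=2}^\ell\ab{G_m(Z)}\\
&\leq 2+2^{1/4}C_{2}\,\gamma\Big(\frac{1-\gamma^{\ell-1}}{1-\gamma}\Big)^{3/4}=:h(\gamma),
\end{align*}
obtained by rerunning the Lemma~\ref{la31765}/Lemma~\ref{la4866} chain over $m=2,\dots,\ell$ (the paper does not invoke (\ref{eq4236a}), which is a $G_2$-specific bound, but that is cosmetic). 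The genuine gap is in how you close the regime $\gamma>x_\ell$: the fact that the two bounds \emph{agree at} $\gamma=x_\ell$ does not imply $h(\gamma)\leq\kappa_\ell\,\gamma^{(\ell+1)/2}$ \emph{beyond} $x_\ell$ --- both sides are increasing in $\gamma$, and equality at one point says nothing about their order afterwards. What you need, and what the first sentence of the paper's proof supplies, is that the left-hand side of (\ref{eq8546}) divided by $x^{(\ell+1)/2}$, i.e.\ $h(x)/x^{(\ell+1)/2}$, is decreasing on $(0,1)$, while the normalized right-hand side $(\ell+1)^{1/4}C_{\ell+1}(1-x)^{-3/4}$ is increasing. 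This single observation does double duty: it gives existence and uniqueness of $x_\ell$ (the normalized sides cross exactly once, since the left one blows up at $0^+$ and stays finite at $1^-$ while the right one is finite at $0^+$ and blows up at $1^-$), and for $\gamma>x_\ell$ it yields exactly $h(\gamma)\leq \frac{h(x_\ell)}{x_\ell^{(\ell+1)/2}}\,\gamma^{(\ell+1)/2}=\frac{(\ell+1)^{1/4}C_{\ell+1}}{(1-x_\ell)^{3/4}}\,\gamma^{(\ell+1)/2}$ by the definition of $x_\ell$. You flagged a monotonicity check as ``the main technical obstacle'' for uniqueness but never connected it to the second regime, which is where it is indispensable.

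Moreover, the particular normalization you propose for uniqueness --- multiplying (\ref{eq8546}) by $(1-x)^{3/4}$ and showing the difference is monotone --- is likely to fail for $\ell\geq2$: the left side becomes $2(1-x)^{3/4}+2^{1/4}C_2\,x(1-x^{\ell-1})^{3/4}$, whose second term vanishes at both endpoints of $(0,1)$ and is not monotone, so the difference has no evident monotonicity. The paper's normalization is checkable: writing $\frac{1-x^{\ell-1}}{1-x}=\sum_{m=0}^{\ell-2}x^m$, the normalized left-hand side equals $2x^{-(\ell+1)/2}+2^{1/4}C_2\big(\sum_{m=0}^{\ell-2}x^{m+2(1-\ell)/3}\big)^{3/4}$, and pairing the exponents for $m$ and $\ell-2-m$ (each pair sums to $-(\ell+2)/3<0$, and any self-paired middle exponent is negative) shows every pair $x^a+x^b$ with $a+b<0$ is decreasing on $(0,1)$, since $ax^{a-1}+bx^{b-1}=x^{a-1}(a+bx^{b-a})<x^{a-1}(a+b)<0$ when $a\leq 0\leq b$. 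Two further slips, both harmless to the final claims: the left-hand side of (\ref{eq8546}) does \emph{not} grow like $(1-x)^{-3/4}$ as $x\to1^-$ --- it tends to the finite limit $2+2^{1/4}C_2(\ell-1)^{3/4}$ --- though your conclusion that the right-hand side dominates near $1$ stands; and the case $\gamma\geq1$ you worry about cannot occur, since by Remark~\ref{rem474} one has $\alpha\leq1-\beta$ and hence $\gamma\leq \frac{n}{N}\leq1$, so ``no restrictions on $\gamma$'' only requires covering $(x_\ell,1]$, where Lemma~\ref{la287765} applies.
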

\begin{proof}
Dividing (\ref{eq8546}) by $x^{(\ell+1)/2}$ yields
a decreasing left-hand side, whereas the right-hand side 
remains increasing in $x$. 
Therefore (\ref{eq8546}) has indeed a unique positive 
solution $x_\ell\in(0,1)$. 
Similarly as in the proof of Theorem~\ref{th2994}, we have 
\begin{align*}
\lefteqn{\Ab{\frac{(N-n)!}{N!}\Per{Z} -H_\ell(Z)}
\leq\frac{(N-n)!}{N!}\ab{\Per{Z}}+\ab{H_\ell(Z)}
\leq2+\sum_{m=2}^\ell\ab{G_m(Z)}}\\
&\leq2+2^{1/4}C_{2}\gamma
\Big(\sum_{m=0}^{\ell-2}\sqrt{m+1}\gamma^{m}\Big)^{1/2} 
\leq2+2^{1/4}C_{2}\gamma
\Big(\frac{1-\gamma^{\ell-1}}{1-\gamma}\Big)^{3/4}
=:h(\gamma).
\end{align*}
If $\gamma\in[0,x_\ell]$, we obtain 
by Theorem~\ref{th2994} that
\begin{align*}
\Ab{\frac{(N-n)!}{N!}\Per{Z} -H_\ell(Z)}
&\leq(\ell+1)^{1/4}C_{\ell+1}\,
\frac{\gamma^{(\ell+1)/2}}{(1-\gamma)^{3/4}}
\leq\frac{(\ell+1)^{1/4}C_{\ell+1}}{(1-x_\ell)^{3/4}}\,
\gamma^{(\ell+1)/2}.
\end{align*}
If $\gamma\in(x_\ell,\infty)$, then 
\begin{align*}
\Ab{\frac{(N-n)!}{N!}\Per{Z} -H_\ell(Z)}
&\leq h(\gamma)
\leq \frac{h(x_\ell)}{x_\ell^{(\ell+1)/2}}\gamma^{(\ell+1)/2} 
=\frac{(\ell+1)^{1/4}C_{\ell+1}}{(1-x_\ell)^{3/4}}\,
\gamma^{(\ell+1)/2}.
\end{align*}
It remains to use that 
$x_1\leq 0.5611$, $x_2\leq0.7222$ and $x_3\leq 0.7812$.  
\end{proof}
\section*{Acknowledgment}
The author is indebted to an anonymous reviewer for bringing to 
his attention the result given in Lemma \ref{la61966}
and for the indication of how it helps to improve the previous 
version of Lemma \ref{la2655}. This led to the significant improvement
of constants in several upper bounds. The author also thanks
Lutz Mattner for helpful comments. 


\bibliographystyle{amsalpha}

\end{document}